\pgfplotsset{compat=1.11}
\newtheorem{Theorem}{Theorem}
\newtheorem{definition}[Theorem]{Definition}
\newtheorem{lemma}[Theorem]{Lemma}
\newtheorem{remark}[Theorem]{Remark}
\newtheorem{proposition}[Theorem]{Proposition}
\providecommand{\keywords}[1]
{
  \small	
  \textbf{\textit{Keywords---}} #1
}
\numberwithin{equation}{section}
\numberwithin{Theorem}{section}
\DeclareMathOperator*{\esssup}{ess\,sup}
\begin{document}
\title{On the Functional L\'{e}vy-It\^{o} Stochastic Calculus}
\author{Christian Houdr\'{e}\footnote{School of Mathematics, Georgia Institute of Technology, Atlanta, GA, 30332-0160, USA, houdre@math.gatech.edu. Research supported in part by the Simons Foundation grant \#524678.}\,\, and Jorge V\'{i}quez\footnote{School of Mathematics, Georgia Institute of Technology, Atlanta, Georgia, 30332-0160, USA, jviquez6@gatech.edu. Partially funded by the Department of Mathematics of the University of Costa Rica.}}

\maketitle

\begin{abstract}
Several versions of It\^{o}'s formula have been obtained in the context of the functional stochastic calculus. Here, we revisit this topic in two ways. First, by defining a notion of derivative along a functional, we extend the setting of the (semimartingale) functional It\^{o}'s formula and corresponding calculus. Second, for L\'{e}vy processes, an optimal local-time based It\^{o}'s formula is obtained. Some quick applications are then given.
\end{abstract}

\keywords{Stochastic calculus, functional calculus, functional Itô formula, semimartingale, L\'{e}vy processes, local time, stochastic differential equations.}\\

\textit{Mathematics Subject Classification}. 60H05, 60H7, 60H15, 60H25, 60G51.

\section{Introduction}

Dupire \cite{Dupire09} defined notions of vertical and horizontal derivatives allowing for a functional version of It\^{o}'s formula useful in applications. These definitions expanded upon previous ones such as those of Ahn \cite{Ahn}, which involved Fr\'{e}chet derivatives and required considerations of changes along the whole trajectory of a process. When the corresponding derivatives exist, in both of these approaches, Ji and Yang \cite{Comparison} showed that they are the same, although in general, the existence of Fr\'{e}chet derivatives is a stronger requirement. Cont and Fourni\'{e} \cite{Barcelona},\cite{Cont13},\cite{CONT20101043},\cite{Fournie10} revisited these definitions in light of the pathwise framework pioneered by F\"{o}llmer \cite{Follmer81}, and extended the formula to general c\`{a}dl\`{a}g functions with bounded quadratic variation along a sequence of partitions. On the other hand, Saporito \cite{Saporito} extended the functional It\^{o} formula to obtain a Meyer-Tanaka theorem under regularity conditions in the functional derivatives. Moreover, Levental, Schroder, and Sinha \cite{LEVENTAL13} obtained a version of the It\^{o} formula for functionals of general semimartingales, later applied by Siu \cite{ConvexRisk} to study convex risk measures. Other versions specialized to jump diffusion processes, and making use of Fr\'{e}chet derivatives in an $L^2$ space, have been developed by Ba\~{n}os, Cordoni, Di Nunno, Di Persio, and Røse in \cite{MemoryJumps}.

Further extensions and alternative approaches have also been proposed. For example, Oberhauser \cite{Oberhauser} describes general conditions that a functional and its derivatives have to satisfy in order for a functional It\^{o} formula to hold true, and Litterer and Oberhauser \cite{OberhauserStratonovich} develop an iterated integral extension for differentiable functionals in the Stratonovich setting. Another approach is that of Cosso and Russo \cite{CossoRusso} which uses calculus via regularization to obtain an It\^{o} formula for functionals of processes with continuous paths, without requiring to extend the domain of the functional to processes with c\`{a}dl\`{a}g paths. Still, using calculus via regularization, Bouchard, Loeper, and Tan \cite{Bouchard} extended the functional It\^{o} formula to continuous weak Dirichlet processes, under less stringent regularity conditions. Let us also mention the work of Buckdahn, Ma, and Zhang \cite{PathwiseTaylor} who obtained a Taylor expansion for path functionals using derivatives defined from the semimartingale decomposition of It\^{o} processes. Moreover, in the context of path dependent partial differential equations (PPDE), Ekren, Touzi, and Zhang\cite{ViscosityI}, \cite{ViscosityII} defined functional derivatives using the functionals that allow an It\^{o} formula to be established, and used them to define viscosity solutions for PPDEs. Keller \cite{ViscosityKeller} extended these derivatives to the discontinuous setting to allow for path dependent integro-differential equations. Then, similar derivatives were defined by Keller and Zhang \cite{RoughPath} in order to address viscosity solutions in the context of rough paths.\\

Let us briefly describe the content of our notes. At first, Section 2 provides an introduction to some aspects of the functional It\^{o} calculus. The functionals to be used and the space they act upon are defined, together with the corresponding functional derivative. A functional Fisk-Stratonovich formula is also obtained. Section 3 introduces a notion of derivative in the direction of a functional, and relates it to the horizontal derivative. Under smoothness conditions it is then expressed via the horizontal and vertical derivatives. In particular, when the functional is h-Lipshitz, this derivative is well defined. Next, Section 4, derives an It\^{o} formula when the underlying path is given by a L\'{e}vy process, extending the optimal, local-time based, It\^{o}'s formula of Eisenbaum and Walsh \cite{Eisenbaum09} to the functional setting. Section 5 discusses some simple applications.\\

\section{Notations and Definitions}

This section presents some of the definitions and concepts that will be dealt with throughout the rest of these notes, many originate in \cite{Dupire09}, some from \cite{Bouchard}, and some are original. We work with the space $D([0,T],\mathbb{R}^d)$ of c\`{a}dl\`{a}g functions $w$ with domain $[0,T]$ and codomain $\mathbb{R}^d$. Then, $(D([0,T],\mathbb{R}^d),\mathcal{F},(\mathcal{F}_t)_{t \in [0,T]},\mathbb{P})$ is our underlying probability space, satisfying the usual conditions, in which the stochastic process $(X(t))_{t \in [0,T]}$ with $X(t,w) = w(t)$ is adapted. In the manuscript, different probability measures $\mathbf{P}$, over the same filtration and for which the process $(X(t))_{t \in [0,T]}$ is a semimartingale will also be used. Throughout, let $w_{\wedge t} \in D([0,T],\mathbb{R}^d)$ be defined via \[w_{\wedge t}(s) := w(s)\mathbbm{1}_{[0,t)}(s)+w(t)\mathbbm{1}_{[t,T]}(s),\] 
\noindent and let $w_{\wedge t}^h \in D([0,T], \mathbb{R}^d)$ be defined via 
\[w_{\wedge t}^h(s) := w_{\wedge t}(s)+h\mathbbm{1}_{[t,T]}(s).\]

Below, the main objects of study are functionals $F:[0,T]\times D([0,T],\mathbb{R}^d) \to \mathbb{R}$ which are \emph{non-anticipative} in that,

\begin{dmath*}
    F(t,x) = F(t,x_{\wedge t})
\end{dmath*},
\noindent
and which are measurable with respect to the product $\sigma$-field $\mathcal{B}([0,T])\otimes \mathcal{F}$, where $\mathcal{B}([0,T])$ is the Borel $\sigma$-field of $[0,T]$. Moreover, $[0,T]\times D([0,T],\mathbb{R}^d)$ is equipped with a pseudometric $d_*$ defined via $d_*((t,w),(s,v)) := ||(t,w)-(s,v)||_* := |t-s|+d_D(w_{\wedge t},v_{\wedge s})$, where $d_D$ could be any metric in $D([0,T],\mathbb{R}^d)$. Below, the metric will be given by ``the infinity norm", $\lvert\lvert\cdot\rvert\rvert_\infty$, but it could also be, for example, given by a norm associated with the Skorokhod topology.\\

A functional $F$ is said to be \emph{(right)-horizontally differentiable} at $(t,w) \in [0,T]\times D([0,T], \mathbb{R}^d)$, $t < T$, if the following limit exists:

\begin{align}
    DF(t,w) &:= \lim_{h\to 0^+} \frac{F(t+h,w_{\wedge t})-F(t,w_{\wedge t})}{h}.
\end{align}
Thus, if $F$ is horizontally differentiable for any pair $(t,w)$ the functional $DF: [0,T)\times D([0,T],\mathbb{R}^d)$ associating to each pair its horizontal derivative is well defined.\\

Similarly, a functional is said to be \emph{space differentiable} at $(t,w) \in [0,T]\times D([0,T],\mathbb{R}^d)$, in the direction of a canonical vector $e_i \in \mathbb{R}^d$, $i = 1,...,d$, if the following limit exists:

\begin{align}
    \partial_i F(t,w) &:= \lim_{h \to 0} \frac{F(t,w_{\wedge t}^{he_i})-F(t,w_{\wedge t})}{h}.
\end{align}

Again, if this limit exists for any pair $(t,w)$, and any $e_i$, one defines a functional $\partial_i F: [0,T]\times D([0,T],\mathbb{R}^d)$ which associates to each pair its derivative with respect to the $i$-th canonical vector. With the help of these functionals one then defines the gradient of the functional as:

\begin{align}
    \nabla F(t,w) := (\partial_1 F(t,w),\partial_2 F(t,w),...,\partial_d F(t,w)).
\end{align}
\\
Next, let us recall the regularity conditions imposed on $F$ in \cite{Cont13} in order to obtain the corresponding It\^{o}'s formula. A functional $F$ is said to be \emph{boundedness-preserving} if for every compact set $K \subset \mathbb{R}^d$, and any $t^* \in [0,T]$, there exists a constant $C_{K,t^*}$ such that for any function with co-domain $K$, $|F(t,w_{\wedge t})| \leq C_{K,t^*}$, for all $t \leq t_*$. A functional is said to be \emph{fixed-time continuous at $(t,w)$}  if $F(t,\cdot)$ is continuous at $w_{\wedge t}$ as a function of its second variable i.e., if for all $\epsilon> 0$, there exists $\delta > 0$, such that $\vert\vert v_{\wedge t}-w_{\wedge t}\vert\vert_\infty < \delta$ implies $\vert F(t,w_{\wedge t})-F(t,v_{\wedge t})\vert < \epsilon$. In a similar way, it will be called \emph{fixed-time continuous} if it is fixed time continuous for every pair $(t,w) \in [0,T]\times D([0,T], \mathbb{R}^d)$. Finally, \emph{left-continuity (in time) at $(t,w)$} is used for functionals $F$ such that for all $\epsilon > 0$, there exists $\delta > 0$ such that $s<t$ with $||(t,w)-(s,v)||_*<\delta$ implies $\vert F(t,w_{\wedge t})-F(s,v_{\wedge s})\vert <\epsilon$. Again, $F$ will then be called \emph{left-continuous (in time)} if it is left-continuous for every pair $(t,w) \in [0,T]\times D([0,T],\mathbb{R}^d)$.\\

Let $C^{j,k} := C^{j,k}([0,T]\times D([0,T],\mathbb{R}^d))$, $j,k \in \{1,2,...\}$, be the set of left-continuous (in time) functionals which are $j$-times horizontally differentiable, $k$-times space differentiable, with the horizontal derivatives continuous at fixed times, while the space derivatives are left-continuous in time, and all these functionals are boundedness-preserving. Additionally $C^{0,k}$ and $C^{j,0}$ denote the set of boundedness-preserving, left-continuous functionals that satisfy the differentiability requirements given by the non-zero super index. In the same way, $C^{0,0}$ corresponds to the set of boundedness-preserving, left-continuous (in time) functionals.\\

Within this framework, the following result was proved for continuous functionals in \cite{Dupire09}, and for left-continuous ones in \cite{Cont13}. Below, $([X](t))_{t \in [0,T]}$ denotes the quadratic covariation matrix of the process $X = (X(t))_{t \in [0,T]}$.

\begin{Theorem}[Functional It\^{o} Formula]
\label{Functional Ito Formula}
Let $F \in C^{1,2}$, and let $\mathbf{P}$ be a probability measure such that $(X(t))_{t \in [0,T]}$ is $\mathbf{P}$-a.s. a continuous semimartingale. Then $\mathbf{P}-$a.s.:

\begin{multline}
    F(T,X_{\wedge T})-F(0,X_0) = \int_0^T DF(t,X_{\wedge t})\,\mathrm{d}t+\int_0^T \nabla F(t,X_{\wedge t})\cdot\mathrm{d}X(t)+\frac{1}{2}\int_0^T Tr(\nabla^2 F(t,X_{\wedge t})d[X](t)).
\end{multline}

\end{Theorem}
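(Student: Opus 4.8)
The plan is to reduce the functional It\^{o} formula to the classical (finite-dimensional) It\^{o} formula by a localization-and-partition argument, following the pathwise approach of F\"{o}llmer and its functional refinement in \cite{Cont13}. First I would fix a generic c\`{a}dl\`{a}g path of $X$ (which is $\mathbf{P}$-a.s.\ continuous by hypothesis) and work on a dense sequence of partitions $\pi_n = \{0 = t_0^n < t_1^n < \cdots < t_{m_n}^n = T\}$ with mesh tending to zero along which the quadratic variation of $X$ converges; since $X$ is a continuous semimartingale under $\mathbf{P}$, such partitions exist $\mathbf{P}$-a.s., and the sums $\sum_i (X(t_{i+1}^n) - X(t_i^n))(X(t_{i+1}^n) - X(t_i^n))^{\top}$ converge to $[X]$. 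By the boundedness-preserving hypothesis and continuity of the path, all quantities $F$, $DF$, $\nabla F$, $\nabla^2 F$ evaluated along the (bounded) trajectory remain bounded, so I may restrict attention to a compact set $K \supset X([0,T])$ and pick up the uniform constants $C_{K,T}$; this is what legitimizes all the interchanges of limits and integrals below.

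The core of the argument is a telescoping decomposition of $F(T, X_{\wedge T}) - F(0, X_0)$ along $\pi_n$. For each subinterval I write the increment $F(t_{i+1}^n, X_{\wedge t_{i+1}^n}) - F(t_i^n, X_{\wedge t_i^n})$ as the sum of a \emph{horizontal} increment and a \emph{vertical} increment, namely
\begin{align*}
F(t_{i+1}^n, X_{\wedge t_i^n}) - F(t_i^n, X_{\wedge t_i^n}) \quad \text{(horizontal, frozen path)},
\end{align*}
plus
\begin{align*}
F(t_{i+1}^n, X_{\wedge t_{i+1}^n}) - F(t_{i+1}^n, (X_{\wedge t_{i+1}^n} \text{ with flat last piece})) \quad \text{(vertical jump by } X(t_{i+1}^n) - X(t_i^n)).
\end{align*}
The horizontal piece is handled by the fundamental theorem of calculus for $t \mapsto F(t, X_{\wedge t_i^n})$ on $[t_i^n, t_{i+1}^n]$, which is right-differentiable with derivative $DF$ by hypothesis and left-continuous in time, so its sum over $i$ converges to $\int_0^T DF(t, X_{\wedge t})\,\mathrm{d}t$ (here one uses continuity at fixed times of $DF$ together with left-continuity to pass from $X_{\wedge t_i^n}$ to $X_{\wedge t}$ and dominated convergence). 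The vertical piece is expanded to second order in the space variable: by $C^{1,2}$ regularity, a Taylor expansion of $h \mapsto F(t_{i+1}^n, (X_{\wedge t_i^n})^{h})$ gives a first-order term $\nabla F(t_{i+1}^n, X_{\wedge t_i^n}) \cdot (X(t_{i+1}^n) - X(t_i^n))$, a second-order term $\tfrac12 (X(t_{i+1}^n) - X(t_i^n))^{\top} \nabla^2 F \,(X(t_{i+1}^n) - X(t_i^n))$, and a remainder that is $o(|X(t_{i+1}^n) - X(t_i^n)|^2)$ uniformly, using continuity of $\nabla^2 F$ and uniform continuity of the path. Summing, the first-order terms converge to the It\^{o} integral $\int_0^T \nabla F(t, X_{\wedge t}) \cdot \mathrm{d}X(t)$ (this requires the semimartingale integration theory and a standard argument that the Riemann sums with left endpoints converge in probability), the second-order terms converge to $\tfrac12 \int_0^T \mathrm{Tr}(\nabla^2 F(t, X_{\wedge t})\,\mathrm{d}[X](t))$ by convergence of the discrete quadratic variation against a continuous integrand, and the remainder vanishes because $\sum_i |X(t_{i+1}^n) - X(t_i^n)|^2$ stays bounded while the $o(1)$ factor goes to zero uniformly.

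The main obstacle, and the place where the functional (as opposed to classical) nature of the problem bites, is controlling the passage from the frozen paths $X_{\wedge t_i^n}$ to the running path $X_{\wedge t}$ in all three limits simultaneously, i.e.\ showing that the ``wrong'' evaluation points in the Riemann sums do not matter. This is precisely where the left-continuity in time of $F$, $\nabla F$, and the fixed-time continuity of $DF$, $\nabla^2 F$ are used: one must verify that along the partition $d_*\big((t, X_{\wedge t}), (t_i^n, X_{\wedge t_i^n})\big) \to 0$ uniformly (which follows from uniform continuity of the sample path and mesh $\to 0$), and then invoke the appropriate continuity modulus of each functional, combined with the boundedness-preserving property to dominate. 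A secondary technical point is that the quadratic variation along the chosen partitions must agree $\mathbf{P}$-a.s.\ with the semimartingale bracket $[X]$; for a continuous semimartingale this is classical, but it must be stated so that the pathwise computation meshes with the probabilistic It\^{o} integral. Once these continuity and convergence facts are assembled, taking $n \to \infty$ in the telescoping identity yields the claimed formula $\mathbf{P}$-a.s.
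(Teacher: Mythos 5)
The paper does not actually prove this theorem itself: it quotes it as established in \cite{Dupire09} (for continuous functionals) and \cite{Cont13} (for left-continuous ones). Your sketch is therefore to be measured against the proof in those references, and your overall strategy --- telescoping along partitions, splitting each increment into horizontal and vertical parts, second-order Taylor expansion of the vertical part, and convergence of the Riemann sums to the It\^{o} integral and the bracket integral --- is exactly the strategy used there.

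There is, however, one structural gap in how you set up the decomposition. The increment you label ``vertical,'' namely $F(t_{i+1}^n, X_{\wedge t_{i+1}^n}) - F(t_{i+1}^n, X_{\wedge t_i^n})$, is not a vertical perturbation: the two paths differ on the whole interval $[t_i^n, t_{i+1}^n)$ (one follows $X$, the other is frozen at $X(t_i^n)$), not merely by a bump of the terminal value. The quantity you then Taylor-expand, $F\bigl(t_{i+1}^n, (X_{\wedge t_i^n})^{h}\bigr)$ at $h = X(t_{i+1}^n)-X(t_i^n)$, \emph{is} a genuine vertical perturbation, but it differs from your vertical increment by the residual $F(t_{i+1}^n, X_{\wedge t_{i+1}^n}) - F\bigl(t_{i+1}^n, (X_{\wedge t_i^n})^{X(t_{i+1}^n)-X(t_i^n)}\bigr)$, so your telescoping identity does not close. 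Each such residual tends to zero by fixed-time continuity of $F$, but there are $m_n \to \infty$ of them, and mere continuity gives no control on their sum; invoking left-continuity and the pseudometric $d_*$, as you do in your last paragraph, does not by itself resolve this. The standard repair --- and the one used in \cite{Cont13} --- is to run the entire telescoping identity on the piecewise-constant c\`{a}dl\`{a}g approximation $X^n$ adapted to $\pi_n$ (for which every increment at a partition point is exactly a vertical jump and every increment inside a subinterval is exactly horizontal, so the identity is exact), and only afterwards let $X^n \to X$ on both sides, using the uniform convergence $F(\cdot, X^n_{\wedge \cdot}) \to F(\cdot, X_{\wedge \cdot})$ supplied by left-continuity via the $\Lambda$-lemma (Lemma 2.2 of this paper). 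This is also why the functional must be defined and regular on c\`{a}dl\`{a}g paths even though the limiting path is continuous. With that restructuring, the rest of your argument (fundamental theorem of calculus for the horizontal part, convergence of the left-endpoint Riemann sums to the It\^{o} integral, convergence of the discrete quadratic variation against a left-continuous bounded integrand, and vanishing of the Taylor remainder) goes through as you describe.
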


Moreover, \cite{Fournie10}, \cite{Barcelona}, and \cite{LEVENTAL13}, extended $(2.4)$ to measures $\mathbf{P}$ for which $(X(t))_{t \in [0,T]}$ is a c\`{a}dl\`{a}g semimartingale, in which case $\mathbf{P}$-a.s.:

\begin{multline}
     F(T,X_{\wedge T})-F(0,X_0) = \int_0^T DF(t,X_{\wedge t^-})\,\mathrm{d}t+\int_0^T \nabla F(t,X_{\wedge t^-}) \cdot \mathrm{d}X(t)+\frac{1}{2}\int_0^T Tr(\nabla^2 F(t,X_{\wedge t^-})d[X]^c(t))\\
    +\sum_{t \in [0,T]} F(t,X_{\wedge t})-F(t,X_{\wedge t^-})-\langle \nabla F(t,w_{\wedge t^-}),\Delta X(t)\rangle,
\end{multline}

\noindent where $\Delta X(t) = X(t)-X(t^-)$, while  $[X]^c(t)$ is the continuous part of the quadratic covariation matrix, i.e., $([X]^c(t))_{i,j} = [X^i,X^j](t)-\sum_{s\in [0,t]} \Delta X^i(t)\Delta X^j(t)$, $i,j = 1,...,d$.\\

Besides the pathwise derivatives studied in  \cite{Cont13}, additional results have appeared in the literature. For example, \cite{Oberhauser} showed that both the quadratic variation and the stochastic It\^{o} integral are given by differentiable functionals over the set of continuous functions $C([0,T], \mathbb{R}^d)$. Earlier, Ekren, Keller, Touzi, and Zhang \cite{Zhang} similarly defined the functional derivatives $\partial_t F, \partial_w F, \partial_{ww} F$ as the continuous bounded functionals satisfying the relation:

\begin{multline}
    F(T,X_{\wedge T})-F(0,X_0) = \int_0^T \partial_tF(t,X_{\wedge t})\,\mathrm{d}t+\int_0^T \partial_w F(t,X_{\wedge t})\cdot \mathrm{d}X(t)\\
    + \frac{1}{2}\int_0^T Tr(\partial_{ww}F(t,X_{\wedge t})\,\mathrm{d}[X](t)) \condition{$\mathbf{P}$ a.s. for $\mathbf{P} \in M(X)$},
\end{multline}

\noindent where $M(X)$ is the family of probability measures $\mathbf{P}$ under which $X$ is a continuous semimartingale with bounded drift and diffusion. Observe that if $\mathbf{P}$-almost surely, $X$ is right-differentiable on $[s,T)$ with right derivative $y$, then a.s. $dF(t,X_{\wedge t}) = \partial_t F(t,X_{\wedge t})+\langle\partial_w F(t,X_{\wedge t}),y(t)\rangle$, which implies that $\partial_w F(t,X_{\wedge t})$ is the co-invariant derivative of $F$ in the sense of Kim \cite{Kim}. Keller \cite{ViscosityKeller} further extended this definition to include measures for which $X$ is a c\`{a}dl\`{a}g semimartingale with bounded jumps.

 Cosso and Russo \cite{CossoRusso} use calculus by regularization to obtain an It\^{o} formula in a different framework, with functionals in the space $\mathcal{C}([0,T],\mathbb{R}^d)$ of bounded functions continuous on $[0,T)$ with a possible discontinuity at $T$. This space is endowed with the topology of the uniform convergence on compact sets, and this framework allows to part with the requirement of extending functionals to the entire space of c\`{a}dl\`{a}g functions.\\
 
A common alternative to the It\^{o} integral is given by the Fisk-Stratonovich integral. Given an adapted c\`{a}dl\`{a}g process $X$, and a semimartingale $Y$, both taking values in $\mathbb{R}$, the Fisk-Stratonovich integral is given by:

\begin{align}
    \int_0^t X(s^-)\circ \mathrm{d}Y(s) = \int_0^t X(s^-)\cdot\mathrm{d}Y(s)+\frac{1}{2}[X,Y]^c(t),
\end{align}

\noindent where $[X,Y]^c(t)$ is the continuous part of the quadratic covariation of $X(t)$ and $Y(t)$.

The previous definition can then be extended to multivariate processes. Given an adapted c\`{a}dl\`{a}g process $X = (X^1,...,X^d)$, and a semimartingale $Y = (Y^1,...,Y^d)$, the Fisk-Stratonovich integral is then given by:

\begin{align*}
    \int_0^t X(s^-)\circ \mathrm{d}Y(s) &= \sum_{i = 1}^d \int_0^t X^i(s^-)\circ \mathrm{d}Y^i(s).
\end{align*}

Provided the equality between $\int_0^t \nabla^2 Tr(F(t,X_{\wedge t})\,\mathrm{d}[X]_t)$ and $\sum_{i = 1}^d\sum_{j = 1}^d [\partial_i F(\cdot,X),X^j]^c(t)$ is established, the functional It\^{o} formula rewrites as:

\begin{multline}
F(T,X_{\wedge T})-F(0,X_0) = \int_0^T DF(t,X_{\wedge t})\,\mathrm{d}t+ \int_0^T \nabla F(t,X_{\wedge t^-})\circ \mathrm{d}X(t)\\
+\sum_{t \in [0,T]} \left(F(t,X_{\wedge t})-F(t,X_{\wedge t^-})-\langle \nabla F(t,w_{\wedge t^-}),\Delta X(t)\rangle\right).
\end{multline}

The above allows to write the Fisk-Stratonovich integral in a manner analogous to the classical setting. However, the expression for $[\partial_i F(\cdot,X),X^i]$ is not immediate, and conditions under which it can be re-written as the aforementioned integral have to be established. This can be done when $\nabla F \in C^{1,2}$, by reapplying Theorem \ref{Functional Ito Formula} to $\nabla F$, as in \cite{OberhauserStratonovich}. Moreover, if $\partial_i F(t,x_{\wedge t}) = \int_0^t g_i(s,x_{\wedge s})\mathrm{d}I(s)$, with $I:[0,T]\to \mathbb{R}^d$ a bounded variation process, and $g$ a boundedness-preserving functional such that $\partial_{ij} F$ is continuous, and $\partial_{ij}F(t,x_{\wedge t}) = \int_0^t \partial_j g_i(s,x_{\wedge s})\mathrm{d}I(s)$, then it follows directly from \cite[Theorem V.19]{Protter} that $(2.8)$ holds true as well. In particular, such conditions are satisfied when $h\in C^{1,0}$, with $h(t,w_{\wedge t},x) = \nabla F(t,w_{\wedge t}^{x-w(t)})$, and this requirement has been previously used by Saporito \cite{Saporito} to obtain a local time It\^{o} formula for functionals of continuous paths. Applying the next theorem to $\partial_i F \in C^{1,1}$, for all $i \in \{1,...,d\}$, allows to write $[\nabla F(\cdot,X),X]$ in a manner that recovers $(2.8)$ from the functional It\^{o} formula $(2.4)$.\\

To start with, a technical lemma is needed:

\begin{lemma}
Let $F \in C^{0,0}$, then for all $x \in D([0,T], \mathbb{R}^d)$, and all $\epsilon >0$, there exists $\delta > 0$ such that if $s \leq t$ and if $y \in D([0,T],\mathbb{R}^d)$ then:

\begin{dmath}
\vert\vert (s,y_{\wedge s})-(t,x_{\wedge t^-})\vert\vert_* < \delta \implies \vert F(s,y_{\wedge s})-F(t,x_{\wedge t^-})\vert < \epsilon
\end{dmath}
\end{lemma}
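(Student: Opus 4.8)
The plan is to derive the statement as a single application of the left-continuity (in time) of $F$, carried out not at the pair $(t,x)$ but at the pair $(t,x_{\wedge t^-})$, where $x_{\wedge t^-}\in D([0,T],\mathbb{R}^d)$ denotes the left-limit-stopped path, $x_{\wedge t^-}(u):=x(u)\mathbbm{1}_{[0,t)}(u)+x(t^-)\mathbbm{1}_{[t,T]}(u)$, equivalently the $||\cdot||_\infty$-limit of $x_{\wedge r}$ as $r\uparrow t$. First I would record two elementary facts. (i)~$x_{\wedge t^-}$ is a genuine c\`{a}dl\`{a}g function: its only critical point is $t$, where both its left limit and its value equal $x(t^-)$ since $x$ is c\`{a}dl\`{a}g; hence $x_{\wedge t^-}$ lies in the domain of $F$. (ii)~$x_{\wedge t^-}$ is a fixed point of the stopping operator at time $t$, i.e.\ $(x_{\wedge t^-})_{\wedge t}=x_{\wedge t^-}$, because $x_{\wedge t^-}$ is already constant on $[t,T]$; likewise $(y_{\wedge s})_{\wedge s}=y_{\wedge s}$ for every $y\in D([0,T],\mathbb{R}^d)$ and every $s\in[0,T]$.

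Next, since $F\in C^{0,0}$, the functional $F$ is left-continuous (in time) at the pair $(t,x_{\wedge t^-})$: given $\epsilon>0$ there is $\delta>0$ so that, for all $s\le t$ and all $v\in D([0,T],\mathbb{R}^d)$ with $||(t,x_{\wedge t^-})-(s,v)||_*<\delta$, one has $|F(t,(x_{\wedge t^-})_{\wedge t})-F(s,v_{\wedge s})|<\epsilon$, that is, $|F(t,x_{\wedge t^-})-F(s,v_{\wedge s})|<\epsilon$ by~(ii). I would then specialize to $v=y_{\wedge s}$. Since $d_*((t,w),(s,v))=|t-s|+||w_{\wedge t}-v_{\wedge s}||_\infty$, using $(x_{\wedge t^-})_{\wedge t}=x_{\wedge t^-}$ and $(y_{\wedge s})_{\wedge s}=y_{\wedge s}$ the distance $||(t,x_{\wedge t^-})-(s,y_{\wedge s})||_*$ equals $|t-s|+||x_{\wedge t^-}-y_{\wedge s}||_\infty$, which is exactly the quantity $||(s,y_{\wedge s})-(t,x_{\wedge t^-})||_*$ appearing in the lemma. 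Thus the hypothesis of the lemma is precisely the hypothesis under which left-continuity at $(t,x_{\wedge t^-})$ applies, and its conclusion $|F(s,y_{\wedge s})-F(t,x_{\wedge t^-})|<\epsilon$ follows at once, with $\delta$ depending only on $\epsilon$, $t$, and $x$.

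The argument is essentially bookkeeping, so I do not expect a substantive obstacle; the two points deserving attention are the choice of base point and the endpoint $s=t$. One must anchor the estimate at $x_{\wedge t^-}$ rather than at $x_{\wedge t}$ — these paths differ exactly when $x$ jumps at $t$ — and verify that $x_{\wedge t^-}$ is an admissible argument of $F$, which uses only that $x$ is c\`{a}dl\`{a}g. The case $s=t$ is then covered because left-continuity (in time) at $(t,w)$ is understood as continuity of $F$ along $\{(s,v):s\le t\}$ at $(t,w_{\wedge t})$, so that fixed-time continuity at $(t,x_{\wedge t^-})$ is in particular available; were only the strict version $s<t$ at hand, the case $s=t$ of the lemma could not be drawn from membership in $C^{0,0}$ alone, since approximating $(t,y_{\wedge t})$ from the left by $(r,y_{\wedge r})$, $r\uparrow t$, fails whenever $y$ jumps at $t$. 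Within the stated framework, however, the lemma is nothing but the left-continuity of $F$ read at the point $(t,x_{\wedge t^-})$.
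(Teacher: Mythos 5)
There is a genuine gap, and you flag it yourself in the sentence ``with $\delta$ depending only on $\epsilon$, $t$, and $x$.'' The lemma asserts the existence of a $\delta$ depending only on $\epsilon$ and $x$, valid simultaneously for \emph{all} pairs $s\le t$ in $[0,T]$: the variable $t$ is quantified inside the scope of ``there exists $\delta$,'' not outside it. This uniform reading is not optional --- it is exactly what the lemma is used for later, namely to conclude that $F(t,X^n_{\wedge t})$ converges to $F(t,X_{\wedge t})$ \emph{uniformly} in $t$ (e.g.\ ``$\vert\vert F(t,X^n_{\wedge t})-F(t,X_{\wedge t})\vert\vert_\infty \to 0$'' in the proof of Theorem 4.4), which a $t$-dependent modulus of continuity cannot deliver. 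Your argument, which fixes $t$ and reads off left-continuity of $F$ at the single base point $(t,x_{\wedge t^-})$, therefore proves only the pointwise-in-$t$ version of the statement. The missing idea is the one the paper borrows from Dupire's $\Lambda$-lemma: argue by contradiction, extract sequences $s_n\le t_n$ and paths $y_n$ witnessing the failure of uniformity with tolerance $1/n$, use compactness of $[0,T]$ to pass to a subsequence $t_{n_k}\to t^*$, observe that then both $(t_{n_k},x_{\wedge t_{n_k}^-})$ and $(s_{n_k},(y_{n_k})_{\wedge s_{n_k}})$ converge to $(t^*,x_{\wedge t^{*-}})$ in $d_*$, and contradict left-continuity of $F$ at that single limit point. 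Compactness is what converts the family of $t$-dependent $\delta$'s you produce into one uniform $\delta$.

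The bookkeeping parts of your write-up are sound and do appear, implicitly, in the paper's argument as well: $x_{\wedge t^-}$ is a legitimate c\`{a}dl\`{a}g element of the domain, it is a fixed point of stopping at $t$, and left-continuity must indeed be invoked at $(t^*,x_{\wedge t^{*-}})$ rather than at $(t^*,x_{\wedge t^*})$ when $x$ jumps. Your remark about the endpoint $s=t$ is also a fair observation about the definition of left-continuity (which is stated with $s<t$), but it is secondary; the substantive defect is the loss of uniformity in $t$.
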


\begin{proof}
The proof follows the method used to prove the $\Lambda$-lemma in \cite{Dupire09}. Fix $x \in D([0,T],\mathbb{R}^d)$, and for the purpose of contradiction, assume that there exist $\epsilon > 0$, and sequences $(s_n)_{n \geq 1}, (t_n)_{n \geq 1}, s_n \leq t_n$ both contained in $[0,T]$, such that

\begin{align}
\begin{split}
    \forall\, n \geq 1, \exists y_n \in D([0,T],\mathbb{R}^d):&\vert\vert (s_n,(y_n)_{\wedge s_n})-(t_n,x_{\wedge t_n^-})\vert\vert_* < 1/n,\, \vert F(s_n, (y_n)_{\wedge s_n})-F(t_n,x_{\wedge t_n^-})\vert \geq \epsilon\\
\end{split}
\end{align}
Since $[0,T]$ is compact, there exist $t^* \in [0,T]$ and an increasing subsequence $(t_{n_k})$ of $(t_n)$ such that $t_{n_k} \to t^*$, then

\begin{align}
\begin{split}
    \vert F(s_{n_k},(y_{n_k})_{\wedge s_{n_k}})-F(t_{n_k},x_{\wedge t_{n_k}^-})\vert \leq  \vert F(s_{n_k},(y_{n_k})_{\wedge s_{n_k}})-F(t^*,x_{\wedge t^{*-}})\vert+\vert F(t_{n_k},x_{\wedge t_{n_k}^-})-F(t^*,x_{\wedge t^{*-}})\vert\\
\end{split}
\end{align}

Since $t_{n_k} \to t^*$, then $s_{n_k} \to t^*$ as well, and $(t_{n_k},x_{\wedge t_{n_k}^-}) \to (t^*,x_{\wedge t^{*-}})$  in the pseudometric $d_*$. Therefore, the last term in $(2.11)$ converges to $0$. Similarly,

\begin{align*}
    \vert\vert (s_{n_k},(y_{n_k})_{\wedge s_{n_k}})-(t^*,x_{\wedge t^{*-}})\vert\vert_* \leq \vert\vert (s_{n_k},(y_{n_k})_{\wedge s_{n_k}})-(t_{n_k},x_{\wedge t_{n_k}}^-)\vert\vert_*+\vert\vert (t_{n_k},x_{\wedge t_{n_k}}^-)-(t^*,x_{\wedge t^{*-}})\vert\vert_*\to 0.
\end{align*}

\noindent Since $F$ is left continuous at $(t^*,x_{\wedge t^*})$ this contradicts $(2.10)$ proving the claim.
\end{proof}

\begin{Theorem}[Fisk-Stratonovich Formula]
\label{Frisk-Stratonovich}

Let $F \in C^{1,1}$, then $\mathbf{P}$-a.s.:

\begin{dmath}
[F(\cdot,X),X^j](t) = \sum_{i = 1}^d \int_0^t \partial_i F(s,X_{\wedge s^-})\mathrm{d}[X^i,X^j]^c(s)+\sum_{s \in [0,t]} \Delta F(t,X_{\wedge t})\Delta X^j(s)
\end{dmath}.
\end{Theorem}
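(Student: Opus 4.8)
\section*{Proof proposal for Theorem \ref{Frisk-Stratonovich}}

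The plan is to compute $[F(\cdot,X),X^j]$ directly from its definition as a limit (in probability, along a refining sequence of partitions) of Riemann sums, thereby also establishing that this covariation exists. Fix a refining sequence of partitions $\pi^n=\{0=t_0<\cdots<t_{m_n}=t\}$ of $[0,t]$, which, for the analysis of the jumps, we may take to contain the (a.s.\ finitely many) jump times of $X$ of size exceeding a fixed threshold $\eta>0$, and expand each increment $\Delta_i F:=F(t_{i+1},X_{\wedge t_{i+1}})-F(t_i,X_{\wedge t_i})$ into a horizontal increment of $F$ along the path frozen at $t_i$, a \emph{first-order} vertical increment obtained by adding $\Delta_i X:=X(t_{i+1})-X(t_i)$ to the current value, and a remainder (accounting both for the higher-order vertical terms and for the difference between the true path on $[t_i,t_{i+1}]$ and its frozen-then-shifted surrogate). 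This is the mechanism behind the functional It\^{o} formula, except that only a \emph{first-order} vertical Taylor expansion is needed here, since the second increment $\Delta_i X^j$ is furnished by the covariation pairing rather than by a second-order term -- which is exactly why $F\in C^{1,1}$ suffices. It is also useful to record at the outset that, by left-continuity in time, $s\mapsto F(s,X_{\wedge s})$ has left limit $F(s,X_{\wedge s^-})$, so that its jump at $s$ is $\Delta F(s,X_{\wedge s})=F(s,X_{\wedge s})-F(s,X_{\wedge s^-})$; consequently the jump part of $[F(\cdot,X),X^j]$ is forced to be $\sum_{s\le t}\Delta F(s,X_{\wedge s})\Delta X^j(s)$, and the content of the theorem is the identification of the continuous part.

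The three pieces are treated as follows. The horizontal piece is $\int_{t_i}^{t_{i+1}}DF(u,\cdot)\,\mathrm{d}u$, which is $O(t_{i+1}-t_i)$ because $DF$ is boundedness-preserving and the paths involved take values in a (random) compact set; its total contribution is therefore bounded by $C\,|\pi^n|^{1/2}t^{1/2}\big(\sum_i(\Delta_i X^j)^2\big)^{1/2}$ (Cauchy--Schwarz), which vanishes as $|\pi^n|\to0$ since the last factor stays bounded. The first-order vertical piece contributes $\sum_i\sum_{l=1}^d\partial_l F(t_i,X_{\wedge t_i})\,\Delta_i X^l\,\Delta_i X^j$; here $s\mapsto\partial_l F(s,X_{\wedge s^-})$ is bounded and predictable (its left-continuity follows from the left-continuity in time of $\partial_l F$, from that of $s\mapsto X_{\wedge s^-}$, and from the uniform left-continuity given by the preceding Lemma applied to $\partial_l F\in C^{0,0}$), and -- after observing that the discrepancy between evaluating $\partial_l F$ at $X_{\wedge t_i}$ and at $X_{\wedge t_i^-}$ occurs only at jump times of $X$ lying in $\pi^n$, each contributing a term with a factor $\Delta_i X\to0$ -- this sum converges to $\sum_{l=1}^d\int_0^t\partial_l F(s,X_{\wedge s^-})\,\mathrm{d}[X^l,X^j](s)$, the \emph{full} covariation. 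The remainder is dominated by $|\Delta_i X|\,\omega_{\nabla F}(|\Delta_i X|)$, with $\omega_{\nabla F}$ a modulus of continuity of $\nabla F$ (uniform over the compact, again via the preceding Lemma); on the intervals free of large jumps this, together with the path-reconciliation term, is $o(1)$ once $|\pi^n|\to0$ and then $\eta\to0$, whereas on each interval $(t_i,t_{i+1}]$ containing a large jump $\tau$ one has $\Delta_i F\to\Delta F(\tau,X_{\wedge\tau})$ by left-continuity in time, so the remainder there tends to $\big(\Delta F(\tau,X_{\wedge\tau})-\langle\nabla F(\tau,X_{\wedge\tau^-}),\Delta X(\tau)\rangle\big)\Delta X^j(\tau)$.

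Adding the three contributions, the discontinuous part of $\sum_l\int_0^t\partial_l F(s,X_{\wedge s^-})\,\mathrm{d}[X^l,X^j](s)$, namely $\sum_{s\le t}\langle\nabla F(s,X_{\wedge s^-}),\Delta X(s)\rangle\Delta X^j(s)$, cancels against the linearizations carried by the remainder, leaving exactly $\sum_{l=1}^d\int_0^t\partial_l F(s,X_{\wedge s^-})\,\mathrm{d}[X^l,X^j]^c(s)+\sum_{s\le t}\Delta F(s,X_{\wedge s})\Delta X^j(s)$; absolute convergence of the jump series follows from $|\Delta F(\tau,X_{\wedge\tau})-\langle\nabla F(\tau,X_{\wedge\tau^-}),\Delta X(\tau)\rangle|\le|\Delta X(\tau)|\,\omega_{\nabla F}(|\Delta X(\tau)|)$ together with $\sum_{s\le t}|\Delta X(s)|^2<\infty$ a.s. The step I expect to be the main obstacle is this bookkeeping around the jumps: since the metric on stopped paths is the uniform norm, stopped paths near a jump of $X$ fail to converge as cleanly as they would in the Skorokhod topology, so one must truncate at level $\eta$, carry every estimate through uniformly, and let $\eta\to0$ only at the end, with the uniform left-continuity from the preceding Lemma (for $F$ and for each $\partial_i F$) being the device that makes the error terms collapse. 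An equivalent, and arguably cleaner, route is to first replace $X$ by its piecewise-constant interpolation along $\pi^n$ augmented by the large jumps -- for which the horizontal--vertical decomposition of the increments is \emph{exact} -- and only then pass to the limit, using the continuity of $F$.
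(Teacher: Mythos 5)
Your overall architecture is sound, and your route genuinely differs from the paper's: you compute $[F(\cdot,X),X^j]$ directly from the Riemann sums $\sum_i \Delta_i F\,\Delta_i X^j$ with a single first-order vertical expansion, whereas the paper first computes the quadratic variation $[F(\cdot,X)](t)=\int_0^t Tr(\nabla F\,\nabla F^T\,\mathrm{d}[X]^c)+\sum_{s\le t}(\Delta F)^2$ (so that only products of first derivatives appear, consistent with $C^{1,1}$) and then recovers the covariation by polarization, $2[F(\cdot,X),X^j]=[F(\cdot,X)+X^j]-[F(\cdot,X)]-[X^j]$. Your treatment of the horizontal part (Cauchy--Schwarz against $\sum_i(\Delta_i X^j)^2$; the paper instead subtracts $\int_0^\cdot DF\,\mathrm{d}s$ at the outset as a bounded-variation term), of the first-order term, and of the jump bookkeeping all match the paper's ingredients: Lemma 2.2, the nested partitions of \cite{Cont13}, the finite set $A$ of large jumps versus the set $B$ with $\sum_{s\in B}\|\Delta X(s)\|_2^2\le\epsilon^2/2$, and localization by stopping times.

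The genuine gap is exactly where you predicted it: the path-reconciliation term. On $[t_i,t_{i+1}]$ the frozen-then-shifted surrogate differs from $X_{\wedge t_{i+1}}$ in uniform norm by the oscillation of $X$ on $[t_i,t_{i+1})$, so the per-interval reconciliation error is controlled only by $\omega_F(\mathrm{osc}_i)$ for a modulus supplied by Lemma 2.2. Declaring this ``$o(1)$'' does not suffice: it is multiplied by $|\Delta_i X^j|$ and summed over $\sim m_n$ intervals, and after Cauchy--Schwarz one needs $\sum_i\omega_F(\mathrm{osc}_i)^2\to0$, which a bare (non-Lipschitz) modulus does not deliver --- $\sum_i\mathrm{osc}_i^2$ is controlled by the quadratic variation, but $\sum_i\omega_F(\mathrm{osc}_i)^2$ need not be. (Your Taylor remainder is fine, since there the modulus enters only through the single factor $\omega_{\nabla F}(\max_{i\notin A}|\Delta_i X|)$, which vanishes after $n\to\infty$ and then $\eta\to0$, multiplied by the bounded quantity $\sum_i|\Delta_i X|\,|\Delta_i X^j|$.) The fix is the one you name in your last sentence, and it is what the paper actually does: work with the piecewise-constant interpolations $X^n$, for which $F_n(t)=F(t,X^n_{\wedge t})$ is a step process whose bracket is computed exactly with no reconciliation error; but the passage to the limit is then justified not by ``continuity of $F$'' alone but by the argument of \cite[Theorem V.19]{Protter} that the measures $\mathrm{d}[F_k]$ form a Cauchy sequence in total variation --- the key point being that $F_l-F_k$ changes value only at the finitely many points of $\tau_l$, so its quadratic sums along finer partitions have at most $k_l$ nonzero terms, each bounded by a multiple of $\|F_l-F_k\|_\infty^2\to0$. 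Without that (or an equivalent) device, the interchange of the two limits --- mesh of $\pi^n$ to zero and interpolation index to infinity --- is unjustified, and this is the one step your proposal does not close.
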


\begin{proof}

Since $F$ is left-continuous, horizontally differentiable, with a locally bounded horizontal derivative one has $F(t+h,X_{\wedge t})-F(t,X_{\wedge t}) = \int_t^{t+h} D F(s,X_{\wedge s})\,\mathrm{d}s$. Thus if one redefines $F(t,X_{\wedge t})$ by substracting $\int_0^t D F(s,X_{\wedge s})\,\mathrm{d}s$ (which we do), and since this last integral is of bounded variation, the resulting functional (which we still denote by $F$) will have the same covariation with $X$, and will be constant along constant paths, i.e, $F(t+s,w_{\wedge t}) = F(t,w_{\wedge t})$, for any pair $(t,w)$, with $t < T$.\\

To start, assume that a.s.\ there exists $M > 0$ such that, $\vert\vert X(t)\vert\vert_2 \leq M$, for all $t \in [0,T]$, with $\vert\vert\cdot\vert\vert_2$ the usual Euclidean norm in $\mathbb{R}^d$. Next, as in \cite[Lemma A.3]{Cont13} take $\{\tau_n\}_{n \geq 1}$ a nested sequence of partitions $\tau_n = \{t^n_0,...,t^n_{k_n}\}$ such that the c\`{a}dl\`{a}g approximations to $X$: $X^n(t) = \sum_{i = 0}^{k_n-1} X(t^{n-}_{i+1})\mathbbm{1}_{[t^n_i,t^n_{i+1})}(t)+X(T)\mathbbm{1}_{\{T\}}(t)$ converge uniformly to the original function, the oscillation of the original $X$ inside each subinterval $[t^n_i,t^n_{i+1})$ converges uniformly to $0$, and the jumps at times not contained in $\tau_n$ converge uniformly to 0. \\

From Lemma 2.2. it follows that $F_n(t) = F(t,X^n_{\wedge t})$ converges uniformly almost everywhere to $F(t,X_{\wedge t})$. Take $k < l$, then, for $n$ large enough, given the construction in \cite{Cont13}, the intervals from the partition $\tau_n$ will always be nested on the ones from $\tau_l$, therefore:

\begin{align*}
    \sum_{i = 0}^{k_n-1} &[(F_l(t^n_{i+1})-F_k(t^n_{i+1}))-(F_l(t^n_i)-F_k(t^n_i))]^2\\
    =\,&\sum_{i = 0}^{k_n-1} [(F_l(t^n_{i+1})-F_l(t^n_i))-(F_k(t^n_{i+1})-F_k(t^n_i))]^2\\
    =\,&\sum_{i = 0}^{k_n-1} [(F_l(t^n_{i+1})-F_l(t^n_i))-(F_k(t^n_{i+1})-F_k(t^n_i))]^2\mathbbm{1}_{\tau_l}(t^n_{i+1})\\
    \leq\,& 2\sum_{i = 0}^{k_n-1} (F_l(t^n_{i+1})-F_k(t^n_{i+1}))^2+(F_l(t^n_i)-F_k(t^n_i))^2\mathbbm{1}_{\tau_l}(t^n_{i+1}),
\end{align*}

Since this last sum is finite and since $(X^n(t^-))_{n \geq 1}$ forms a Cauchy sequence, the original expression converges to $0$. From this, by proceeding as in the proof of \cite[Theorem V.19]{Protter}, it follows that the measures $(d[F_k])_{k \geq 1}$ form a Cauchy sequence with respect to the total variation distance. Since $F_n$ is a step function with a finite number of discontinuities, one has that $[F_n]_t < \infty$, and moreover:

\begin{align*}
[F_n](t)-(F(T,X_{\wedge T})-F(T,X_{\wedge T^-}))^2\mathbbm{1}_{\{T\}}(t) &= \sum_{i = 0}^{k_n-1} (F(t^n_{i+1} \wedge t, X^n_{\wedge (t^n_{i+1})})-F(t^n_i\wedge t,X^n_{\wedge (t^n_i)}))^2\\
&= \sum_{i = 0}^{k_n-1} \left(F\Big(t^n_{i+1}\wedge t ,(X^n_{\wedge t^{n-}_{i+1}})^{X(t^{n-}_{i+2})-X(t^{n-}_{i+1})}\right)-F(t^n_{i+1}\wedge t,X^n_{\wedge t^{n-}_{i+1}})\Big)^2.
\end{align*}

As is standard, e.g., see the proof of \cite[Theorem V.18]{Protter}, Theorem V.18], for $\epsilon > 0$ split $[0,T]$ into two sets $A,B \subseteq [0,T]$ such that a.s.\@ $A$ is finite and $\sum_{t \in B} \vert\vert\Delta X(t)\vert\vert_2^2 \leq \epsilon^2/2$. Then, since $A$ is finite:

\begin{multline*}
\sum_{\{i:A\cap [t^n_{i+1},t^n_{i+2}) \neq \emptyset\}} \left(F\left(t^n_{i+1}\wedge t,(X^n)_{\wedge t^{n-}_{i+1}}^{X(t^{n-}_{i+2})-X(t^{n-}_{i+1})}\right)-F(t^n_{i+1}\wedge t,X^n_{\wedge t^{n-}_{i+1}})\right)^2 \\
\underset{n\to \infty}{\longrightarrow} \sum_{ s \in [0,t] \cap A} \left(F(s\wedge t,X_{\wedge s})-F(s\wedge t, X_{\wedge s^-})\right)^2.
\end{multline*}

\noindent Moreover:

\begin{dmath}
\sum_{\{i:[t^n_{i+1},t^n_{i+2}) \cap A = \emptyset\}} \left(F\left(t^n_{i+1}\wedge t,(X^n)_{\wedge t^{n-}_{i+1}}^{X(t^{n-}_{i+2})-X(t^{n-}_{i+1})}\right)-F(t^n_{i+1}\wedge t, X^n_{\wedge t^{n-}_{i+1}})\right)^2
=\sum_{\{i:[t^n_{i+1},t^n_{i+2}) \cap A = \emptyset\}} \left(\langle\nabla F(t_{i+1},X^n_{\wedge t^{n-}_{i+1}}), X(t^{n-}_{i+2})-X(t^{n-}_{i+1})\rangle+R(t^n_{i+1},X^n_{\wedge t^{n-}_{i+1}},X(t^{n-}_{i+2}))\right)^2
\end{dmath},
\noindent where $R$ is the remainder when applying a first order Taylor expansion to the first line of $(2.13)$.

Define the matrix $(D^n(t))_{i,j} = \nabla F(t,X^n_{\wedge t^{n^-}})\nabla F(t,X^n_{\wedge t^{n^-}})^T$, i.e. the matrix with entries $\partial_i F(t,X^n_{\wedge t^{n-}})\partial_j F(t,X^n_{\wedge t^{n-}})$, and observe that its entries are bounded left-continuous functions, thus the last term in $(2.13)$ becomes:

\begin{align}
\sum_{\{i:[t^n_{i+1},t^n_{i+2}) \cap A = \emptyset\}} Tr(D^n(t^n_{i+1})(X(t^{n-}_{i+2})&-X(t^{n-}_{i+1}))(X(t^{n-}_{i+2})-X(t^{n-}_{i+1}))^T)+S_n(t^n_{i+1},X(t^{n-}_{i+2})),
\end{align}

\noindent where 

\begin{align}
S_n(t^n_{i+1},X(t^{n-}_{i+2})) := r_n(t^n_{i+1}, X(t^{n-}_{i+2})) \vert\vert X(t^{n-}_{i+2})-X(^n t_{i+1}^-)\vert\vert_2,
\end{align}

\noindent with
\begin{align}
r_n(t,x) &:= \frac{(f_n(t,x)-f_n(t,X(t^-)))^2-Tr(D^n(t)(x-X(t^-))(x-X(t^-))^T)}{\vert\vert x-X(t^-)\vert\vert_2^2},
\end{align}
and where $f_n(t,x) := F\left(t,(X^n_{\wedge t^-})^{x-X(t^-)}\right)$. First, by the previous Taylor expansion,

\noindent$\lim_{||x-X(t^-)||_2\to 0} r_n(t,x)$ exists. Next, let $M_D > 0$ be such that $\vert (D^n(t))_{i,j}\vert \leq M_D$, for all $i,j \in \{1,2,...,d\}$, $n\geq 1$, $t\in [0,T]$. Since the space derivative is boundedness-preserving, for $x \leq M$ the following uniform bound holds true:

\begin{align*}
    \vert r_n(t,x)\vert &\leq \frac{c^2\vert\vert x-X(t^-)\vert\vert^2+M_D \vert\vert x-X(t^-)\vert\vert_2^2}{\vert\vert x-X(t^-)\vert\vert_2^2}= c^2+M_D = C.
\end{align*}

If $X^-$ is the process given by $X^-(t) = X(t^-)$ for all $t \in [0,T]$, then $[X^-](t) = [X](t)-(\Delta X(T))^2\mathbbm{1}_{\{t = T\}}(t)$, thus $(2.13)$ is equal to:

\begin{align}
\sum_{i = 0}^{k_n-1} &Tr(D^n(t^n_{i+1}) (X(t^{n-}_{i+2})-X(t^{n-}_{i+1}))(X(t^{n-}_{i+2})-X(t^{n-}_{i+1}))^T) \nonumber\\
&-\sum_{\{i:[t^n_{i+1},t^n_{i+2}) \cap A \neq \emptyset\}} Tr(D^n(t^n_{i+1}) (X(t^{n-}_{i+2})-X(t^{n-}_{i+1}))(X(t^{n-}_{i+2})-X(t^{n-}_{i+1}))^T)\nonumber\\
&+\sum_{\{i:[t^n_{i+1},t^n_{i+2}) \cap A = \emptyset\}}S_n(t^n_{i+1},X(t^{n-}_{i+2})).
\end{align}

The first sum above, converges to $\int_{[0,t)} Tr(\nabla F(s,X_{\wedge s^-})\nabla F(s,\wedge s^-)^T \mathrm{d}[X]_s)$, since the derivatives are bounded left-continuous. For the error term in $(2.17)$, observe that:

\begin{align*}
\limsup_n&\sum_{\{i:[t^n_{i+1},t^n_{i+2}) \cap A = \emptyset\}}S_n(t^n_{i+1},X(t^{n-}_{i+2}))\\
=\, &\limsup_n\sum_{\{i:[t^n_{i+1},t^n_{i+2}) \cap A = \emptyset\}}r_n(t^n_{i+1},X(^n t_{i+2}^-))\vert\vert X(t^n_{i+2})-X(t^n_{i+1})\vert\vert_2^2\\
\leq\, &C\limsup_n \max_i r_n(t^n_{i+1},X(t^{n-}_{i+2})) \sum_{\{i:[t^n_{i+1},t^n_{i+2}) \cap A = \emptyset\}} \vert\vert X(t^n_{i+2})-X(t^n_{i+1})\vert\vert_2^2\\
\leq\, &C\limsup_n \max_i r_n(t^n_{i+1},X(t^{n-}_{i+2}))[X](t) = 0 \condition{$\mathbf{P}$-a.s.}
\end{align*}

Therefore, the error term in $(2.17)$ converges to 0, and since $A$ is finite, all the terms with sums over the intervals intersecting $A$ converge to:
\begin{align*}
    \sum_{s \in A \cap [0,t)} (F(s,X_{\wedge s})-F(s,X_{\wedge s}^-))^2-Tr(\nabla F(s,X_{\wedge s^-})\nabla F(s,X_{\wedge t^-})^T \Delta X(s)\Delta X(s)^T).
\end{align*}
Taking $\epsilon \to 0$, and since $X$ has finite quadratic variation, this last sum converges to $\sum_{s \in [0,t)} (F(s,X_{\wedge s})-F(s,X_{\wedge s}^-))^2-Tr(\nabla F(s,X_{\wedge s^-})\nabla F(s,X_{\wedge t^-})^T\Delta X(s)\Delta X(s)^T)$. Therefore, by cancelling the discontinuous part in the integral from before, and adding the possible jump at $t$:

\begin{dmath}
[F(\cdot,X)](t) = \int_0^t Tr(\nabla F(s,X_{\wedge s^-})\nabla F(s,\wedge s^-)^T \mathrm{d}[X]^c(s))+\sum_{s \in [0,t]} \Delta F(s,X_{\wedge s})^2
\end{dmath},

\noindent where again $[X]^c$ is the continuous part of the quadratic variation $[X]$, i.e., $[X]^c(t) = [X](t)-\sum_{t \in [0,T]} (\Delta X(t))^2$.

Finally, since $X$ is almost surely bounded, letting $T_M := \inf\{t > 0: ||X(t)||_2>M\}$, $(2.18)$ holds locally for $X_{t\wedge T_M}$, and by taking $M \to \infty$ one obtains the general version. In conclusion, if $F \in C^{1,1}$, then $[F(\cdot,X)]_t$ is given by (2.14), and since $2[F(\cdot,X),X](t) = [F(\cdot,X)+X](t)-[F(\cdot,X)](t)-[X](t)$, then

\begin{dmath*}
[F(\cdot,X),X^j](t) = \sum_{i = 1}^d\int_0^t \partial_i F(s,X_{\wedge s})\mathrm{d}[X^j]^c(s)+\sum_{s \in [0,T]} \Delta F(s,X_{\wedge s})\Delta X^j(s)
\end{dmath*}.

\noindent Which in turn, allows us to define the Fisk-Stratonovich integral for $F \in C^{1,2}, \partial_i F \in C^{1,1}, i = 1,...,d$, as:

\begin{dmath}
\int_0^t F(s,X_{\wedge s})\circ dX^j(s) = \int_0^t F(s,X_{\wedge s}) dX^j(s)+\frac{1}{2}\sum_{i = 1}^d[\partial_i F(\cdot,X),X^j]^c(t)
\end{dmath}.
\end{proof}

At the time of the writing of these notes and after the above results were obtained, the authors came upon the preprint \cite{bouchardOctober} by Bouchard, and Vallet. There, the authors establish a decomposition for $C^{0,1}$ functionals of c\`{a}dl\`{a}g weak Dirichlet processes, from which a version of the previous theorem follows for vertical derivatives in $C^{0,1}$. To prove this result, \cite{bouchardOctober} assumes an integral condition that is more general than the horizontal differentiability needed for Theorem 2.3, but requires the uniform continuity of the functional $F$.

\section{Derivative in the Direction of a Functional} 
 
Next, a functional derivative is introduced, with the purpose of studying the behavior of a functional along general directions including, but not limited to, the horizontal one. Following this, its relationship to the horizontal derivative is established. At first, let $\gamma: [0,T] \times D([0,T],\mathbb{R}^d) \to \mathbb{R}^d$ be random (the dependency of $\gamma$, and $g$ below, with respect to the source of randomness will be omitted in the notation) non-anticipative, boundedness-preserving, and g-Lipschitz, i.e., such that for all $t \in [0,T]$, $\vert\vert \gamma(t,x)-\gamma(t,y)\vert\vert_2 \leq g(t)\vert\vert x_{\wedge t}-y_{\wedge t}\vert\vert_\infty$, with $g \geq 0$ such that $\int_0^T g(t)\,\mathrm{d}t < \infty$ a.s. Above, both $\gamma$ and $g$ are assumed to be random. Indeed, $\gamma$ depends on the random function $(X(t))_{t \in [0,T]}$ while, for further generality, $g$ could depend on $(X(t))_{t \in [0,T]}$ as well as on additional sources of randomness such as in \cite[Section V.3]{Protter}.\\

Next, for any function $w \in D([0,s],\mathbb{R}^d)$, the existence, and uniqueness, of solutions $Y^{t,w}$ to the differential equation

\begin{align}
\begin{split}
dY(t) &= \gamma(t,Y_{\wedge t})dt\\
Y(t) &= w(t) \condition{for $t \in [0,s]$},
\end{split}
\end{align}

\noindent are established. First, for the moment, assume that $s = 0$, and let,

\begin{align*}
    T_M &:= \inf\left\{t \in [s,T] : \int_s^T \vert\vert \gamma(t,0_{\wedge t})\vert\vert_2\,\mathrm{d}t \lor \int_s^T g(t)\,\mathrm{d}t \geq M\right\},
\end{align*}

\noindent (where the infimum over the empty set is taken to be $\infty$). Then, define the operator $I_M$ acting on $D([0,T],\mathbb{R}^d)$ by

\begin{align*}
    I_M(x)(t) &:= w(0) + \int_s^{t\wedge \frac{1}{2M}\wedge T_M} \gamma(s,x_{\wedge s})\,\mathrm{d}s\\
    &= w(0)+\int_0^{t\wedge \frac{1}{2M} \wedge T_M} \gamma(s,0_{\wedge s})\,\mathrm{d}s + \int_0^{t\wedge 1/2M \wedge T_M} \left(\gamma(s,x_{\wedge s})-\gamma(s,0_{\wedge s})\right)\,\mathrm{d}s.
\end{align*}

\noindent From its very definition, $I_M$ is a contraction for the infinity norm, and therefore, by the Banach fixed point theorem, there exists a unique continuous solution $x^M$ such that for $t \leq 1/2M$, $x^M(t) = w(0)+\int_0^{t\wedge T_M} \gamma(s,x^M_{\wedge s})\,\mathrm{d}s$. Now, if $s > 0$, then a new functional $\gamma_s$ with domain $D([s,T],\mathbb{R}^d)$ can be defined via $\gamma_s(t,x_{\wedge t}) := \gamma(t,\mathbbm{1}_{[0,s]}(\cdot)w(\cdot)+\mathbbm{1}_{(s,T]}(\cdot)x_{\wedge t}$ and the previous observations still allow to define solutions up to (and including) $s+1/2M$. Therefore, using increments of size $1/2M$, a unique continuous solution to

\begin{align*}
    x^M(t) &= w(s)+\int_s^{t\wedge T_M}\gamma(s,x^M_{\wedge s})\,\mathrm{d}s, \,\,\text{$t > s$},\\
    x^M(t) &= w(t), \,\,\text{$t \leq s$},
\end{align*}

\noindent can be defined. Then, since this solution is unique, letting $M \to \infty$ allows to recover a unique continuous function $x$ satisfying the original equation in $[0,T]$. Denoting solutions to $(3.1)$ by $Y^{s,w}$, leads to the following.

\begin{definition}[Derivative in the direction of a functional]
A functional $F:[0,T]\times D([0,T],\mathbb{R}^d) \to \mathbb{R}$ is said to be differentiable in the direction $\gamma:[0,T]\times D([0,T],\mathbb{R}^d) \to \mathbb{R}^d$ at $(t,x) \in [0,T) \times D([0,T),\mathbb{R}^d)$ if the following limit exists:

\begin{dmath}
D^\gamma F(t,x_{\wedge t}) = \lim_{h \to 0^+} \frac{F(t+h, Y^{t,x}_{\wedge t+h})-F(t,x_{\wedge t})}{h}
\end{dmath},

\noindent where $Y^{t,x}$ is the solution to the differential equation $(3.1)$. Moreover, $F$ is said to be differentiable in the direction $\gamma$ if it is differentiable for every pair $(t,x)$ in $[0,T) \times D([0,T),\mathbb{R}^d)$
\end{definition}

\begin{remark}
\begin{enumerate}[(i)]
\item Above, the g-Lipschitz property guarantees the existence of $Y^{s,w}$. If such a process exists for any $(s,w) \in [0,T)\times D([0,T],\mathbb{R}^d)$, then the notion of derivative just put forward can still be defined. From Theorem 2.1, it is seen that if $F \in C^{1,2}$, and if the derivatives involved are at least right-continuous, then

\begin{dmath}
D^\gamma F(t, x_{\wedge t}) = D F(t,x_{\wedge t})+\langle \nabla F(t,x_{\wedge t^-}), \gamma(t,x_{\wedge t})\rangle 
\end{dmath}.

\noindent Note that (3.3) indicates that given the existence of the space derivative, the existence of the horizontal derivative $DF$ and of, $D^\gamma F$, the derivative in the direction $\gamma$, are equivalent to one-another under some smoothness assumptions. Moreover, if $\gamma = 0$, i.e., in the flat direction, the definition of horizontal derivative is recovered. Furthermore, under the same smoothness assumptions, if the horizontal derivative is given, and the $\gamma$-derivative can be obtained for $d$ functionals $(\gamma_1, \gamma_2,...,\gamma_d)$ such that for all pairs $(t,X_{\wedge t})$, $(\gamma_1(t,X_{\wedge t}), \gamma_2(t,X_{\wedge t}),...,\gamma_d(t,X_{\wedge t}))$ are linearly independent (in particular, $\gamma_1(t,X_{\wedge t}) \neq 0$ if $d = 1$). Then, one can recover the vertical derivatives through $(3.3)$, and thus define them without the need for discontinuities.

\item The derivative along any fixed smooth path is also obtained from (3.2) by taking $\gamma$ to be equal to the derivative of the path. More precisely, if at any time $t \in [0,T]$ a derivative is defined using extensions along a fixed smooth path $y:[0,T] \to \mathbb{R}^d$ with slope $y'(t)$, then it is enough to take $\gamma(t, X_{\wedge t}) = y'(t)$. Furthermore, since $(3.3)$ is only influenced by the slope of the extension at any pair $(t,X_{\wedge t})$, one could define the derivative using a functional that defines a constant slope in which to extend the path for each of these pairs. Again, this definition is covered by Definition 3.1 by selecting a $\gamma$ that is constant along constant slopes. Finally, the ability to extending functions in non-constant directions is of interest (see Proposition 5.3)
\end{enumerate}
\end{remark}

The following relationship between horizontal and $\gamma$-derivatives holds true:

\begin{Theorem}

Let $F \in C^{0,1}$, be differentiable in the direction of a g-Lipschitz boundedness-preserving $\gamma$, and let $DF(t,x_{\wedge t}) := D^\gamma F(t,x_{\wedge t})-\langle \nabla F(t,x_{\wedge t^-}), \gamma(t,x_{\wedge t})\rangle$. Then, 

\begin{align}
    F(t+h,x_{\wedge t})-F(t,x_{\wedge t}) = \int_0^h DF(t+s, x_{\wedge t})\,\mathrm{d}s.
\end{align}

\noindent In other words, the derivative in the $\gamma$-direction allows for the construction of a horizontal derivative in the Radon-Nikodym sense. Moreover, if the right-hand side of $(3.4)$ is right differentiable, this derivative is the horizontal one.
\end{Theorem}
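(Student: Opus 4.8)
Throughout fix $(t,x)\in[0,T)\times D([0,T],\mathbb{R}^d)$ and $h>0$ with $t+h\le T$; the target $(3.4)$ reads $F(t+h,x_{\wedge t})-F(t,x_{\wedge t})=\int_0^h DF(t+s,x_{\wedge t})\,\mathrm{d}s$ with $DF=D^\gamma F-\langle\nabla F,\gamma\rangle$. The first thing I would record is a \emph{flow version}: for $r<r'$ in $[t,T]$, letting $Y:=Y^{r,x_{\wedge t}}$ be the $\gamma$-flow started from $(r,x_{\wedge t})$ (it exists by the $g$-Lipschitz hypothesis, via the contraction argument preceding Definition~3.1),
\[
F(r',Y_{\wedge r'})-F(r,x_{\wedge t})=\int_r^{r'}D^\gamma F(v,Y_{\wedge v})\,\mathrm{d}v .
\]
Indeed $v\mapsto F(v,Y_{\wedge v})$ is continuous on $[r,r']$ (Lemma~2.2, since $Y$ is continuous), it has a right derivative equal to $D^\gamma F(v,Y_{\wedge v})$ at every $v\in[r,r')$ (the flow property $Y^{v,Y_{\wedge v}}\equiv Y$ on $[v,T]$ together with $(3.2)$), and this right derivative is locally bounded ($D^\gamma F$ boundedness-preserving, $Y$ bounded), so the elementary fundamental theorem of calculus for functions with a bounded right derivative applies.

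Next, take partitions $t=v^n_0<\dots<v^n_{N_n}=t+h$ with mesh tending to $0$, write $F(t+h,x_{\wedge t})-F(t,x_{\wedge t})=\sum_i\big(F(v^n_{i+1},x_{\wedge t})-F(v^n_i,x_{\wedge t})\big)$, and, with $Y^i:=Y^{v^n_i,x_{\wedge t}}$ and $z^i:=Y^i(v^n_{i+1})-x(t)=\int_{v^n_i}^{v^n_{i+1}}\gamma(v,Y^i_{\wedge v})\,\mathrm{d}v$, split each term as
\[
F(v^n_{i+1},x_{\wedge t})-F(v^n_i,x_{\wedge t})=\Big(F(v^n_{i+1},Y^i_{\wedge v^n_{i+1}})-F(v^n_i,x_{\wedge t})\Big)-\Big(F(v^n_{i+1},Y^i_{\wedge v^n_{i+1}})-F(v^n_{i+1},x_{\wedge t})\Big).
\]
By the flow version the first groups sum to $\sum_i\int_{v^n_i}^{v^n_{i+1}}D^\gamma F(v,Y^i_{\wedge v})\,\mathrm{d}v$, and since $\|Y^i_{\wedge v}-x_{\wedge t}\|_\infty=O(\mathrm{mesh})$ uniformly on the $i$-th cell (boundedness-preservation of $\gamma$ along the flow), dominated convergence and continuity of $D^\gamma F$ give that this tends to $\int_t^{t+h}D^\gamma F(v,x_{\wedge t})\,\mathrm{d}v$. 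For the second groups I would (i) replace $Y^i_{\wedge v^n_{i+1}}$ by the terminally bumped path $(x_{\wedge t})^{z^i}_{\wedge v^n_{i+1}}$ — the two agree off the cell $[v^n_i,v^n_{i+1})$ and differ there by $\le\int_{v^n_i}^{v^n_{i+1}}\|\gamma(v,Y^i_{\wedge v})\|_2\,\mathrm{d}v=O(\mathrm{mesh})$ — and (ii) apply the first-order space Taylor expansion of $F$, as in the proof of Theorem~\ref{Frisk-Stratonovich}, obtaining $\langle\nabla F(v^n_{i+1},x_{\wedge t}),z^i\rangle+o(\|z^i\|_2)$. Summing, $\sum_i\langle\nabla F(v^n_{i+1},x_{\wedge t}),z^i\rangle=\sum_i\int_{v^n_i}^{v^n_{i+1}}\langle\nabla F(v^n_{i+1},x_{\wedge t}),\gamma(v,Y^i_{\wedge v})\rangle\,\mathrm{d}v$, in which $\gamma(v,Y^i_{\wedge v})$ may be replaced by $\gamma(v,x_{\wedge t})$ at a cost $\le C\cdot\mathrm{mesh}\cdot\int_0^T g\to 0$ (the $g$-Lipschitz bound and $\int_0^T g<\infty$); and — crucially — $\nabla F$ is evaluated at the \emph{right} endpoint $v^n_{i+1}\ge v$ of each cell, so the left-continuity in time of $\nabla F$ (the $C^{0,1}$ hypothesis) makes these left-endpoint Riemann sums converge to $\int_t^{t+h}\langle\nabla F(v,x_{\wedge t}),\gamma(v,x_{\wedge t})\rangle\,\mathrm{d}v$. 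Collecting the pieces and sending the mesh to $0$ yields $F(t+h,x_{\wedge t})-F(t,x_{\wedge t})=\int_t^{t+h}\big(D^\gamma F(v,x_{\wedge t})-\langle\nabla F(v,x_{\wedge t}),\gamma(v,x_{\wedge t})\rangle\big)\,\mathrm{d}v$, which is $(3.4)$ after the change of variables $v=t+s$.

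The ``moreover'' is then immediate: $(3.4)$ gives $\big(F(t+h,x_{\wedge t})-F(t,x_{\wedge t})\big)/h=h^{-1}\int_0^h DF(t+s,x_{\wedge t})\,\mathrm{d}s$, so if the right-hand side is right-differentiable at $h=0$ its derivative there equals $\lim_{h\to0^+}\big(F(t+h,x_{\wedge t})-F(t,x_{\wedge t})\big)/h$, which is by definition $(2.1)$ the horizontal derivative of $F$ at $(t,x_{\wedge t})$. I expect the main obstacle to be step (i): showing $\sum_i\big|F(v^n_{i+1},Y^i_{\wedge v^n_{i+1}})-F(v^n_{i+1},(x_{\wedge t})^{z^i}_{\wedge v^n_{i+1}})\big|\to0$. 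The two compared paths differ only on a single partition cell and by only $O(\mathrm{mesh})$ there, so the discrepancy has vanishing amplitude on a vanishing time window; a sup-norm continuity (or even Lipschitz) bound on $F(v,\cdot)$ alone would only give $O(1)$ for the sum, and one must instead exploit non-anticipativity together with boundedness-preservation (the flow stays in a fixed compact set) to push it to $o(1)$. A secondary delicate point is pinning down the regularity implicitly required — local boundedness and fixed-time continuity of $D^\gamma F$ for the flow version, and Riemann-integrability of $v\mapsto\nabla F(v,x_{\wedge t})$ for the Riemann-sum step — which should be taken as part of the standing hypotheses on $F$ and on its being differentiable in the direction $\gamma$.
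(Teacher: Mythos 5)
Your overall architecture is the same as the paper's: partition $[t,t+h]$, decompose each cell's increment into a piece along the $\gamma$-flow (yielding $\int D^\gamma F$) plus a vertical correction (yielding $-\int\langle\nabla F,\gamma\rangle$ after the $dy$-term drops out for the constant path), and pass to the limit using fixed-time continuity of $D^\gamma F$ and left-continuity of $\nabla F$. The paper actually proves the identity for a general continuous bounded-variation path $y$ and then specializes to the flat extension; your direct specialization to $y\equiv x(t)$ is a legitimate simplification. The ``moreover'' part is handled exactly as in the paper.

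However, there is a genuine gap, and you have correctly located it yourself: step (i). Because you run the flow \emph{forward} from $x_{\wedge t}$ on each cell and then must return to $x_{\wedge t}$ at the right endpoint, the path $Y^i_{\wedge v^n_{i+1}}$ differs from the terminally bumped path $(x_{\wedge t})^{z^i}_{\wedge v^n_{i+1}}$ throughout the whole cell $[v^n_i,v^n_{i+1})$, not just at one time. Each replacement error is controlled only by a modulus of fixed-time continuity evaluated at $O(\mathrm{mesh})$, and summing $O(1/\mathrm{mesh})$ such terms gives $O(1)$ even under a Lipschitz hypothesis, exactly as you note; neither non-anticipativity nor boundedness-preservation supplies the missing factor, and under the stated $C^{0,1}$ hypotheses I do not see how to close this. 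The paper's construction is designed precisely to avoid this: on each cell it solves the ODE \emph{backwards} from the prescribed terminal value $y(kT/n)$, so the approximating path $y^{k,n}$ follows a genuine $\gamma$-flow on the open cell and its only discrepancy from the previous approximant is a single vertical jump at the cell boundary. That jump is handled \emph{exactly} by the space derivative (via the auxiliary function $f^{k,n}(z)=F(kT/n,y_{\wedge kT/n}^{z-y(kT/n)})$ and the fundamental theorem of calculus), so the decomposition telescopes with no per-cell error to be summed; the only limits required are of integrands inside $\int_0^{T}$, where dominated convergence applies. To repair your argument, replace the forward-flow-then-correct scheme by this backward-flow-with-terminal-matching scheme; the rest of your estimates (the uniform Taylor remainder bound via boundedness-preservation, the $g$-Lipschitz replacement of $\gamma(v,Y^i_{\wedge v})$ by $\gamma(v,x_{\wedge t})$, and the right-endpoint Riemann sums matched to left-continuity in time) then go through as written.
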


\begin{proof}
First, take $y \in C([0,T];\mathbb{R}^d)$, $y$ of bounded variation with $y(0) = x(0)$. Next, define $y^{k,n}$, $n \geq 1$, $k \in \{1,...,n\}$ sequentially via:

\begin{dgroup*}
\begin{dmath*}
dy^{k,n}(t) = \gamma(t,y^{k,n}_{\wedge t})dt
\end{dmath*},
\begin{dmath*}
y^{k,n}_{\wedge \frac{(k-1)T}{n}^-} = y^{k-1,n}_{\wedge \frac{(k-1)T}{n}} 
\end{dmath*},
\begin{dmath}
y^{k,n}(kT/n) = y(kT/n)
\end{dmath}.
\end{dgroup*}

\noindent More precisely, for $a \in D([0,s];\mathbb{R}^d)$, and $b \in D([0,T-s];\mathbb{R}^d)$, let

\begin{align*}
(a\otimes_s b)(t) := a(t)\mathbbm{1}_{t < s}+ b(t-s)\mathbbm{1}_{t \geq s}.
\end{align*}

Then, after defining each $y^{k-1,n}$, one defines $\gamma^{k-1,n}: [0,T-\frac{(k-1)T}{n}]\times D([0,T-\frac{(k-1)T}{n}];\mathbb{R}^d)$ via $\gamma^{k-1,n}(t,w) := \gamma(t,y^{k-1,n}\otimes_{\frac{(k-1)T}{n}} w)$. Thus $(3.5)$ turns into:

\begin{align}
\begin{split}
&dz = \gamma^{(k-1),n}(t,z_{\wedge t})\mathrm{d}t,\\
&z(1/n) = y(kT/n),
\end{split}
\end{align}

\noindent which has a solution using the same g-Lipschitz arguments as before. Then, define

\begin{align*}
    &y^{k,n} = y^{k-1,n}\otimes_{\frac{(k-1)T}{n}} z,\\
    &y^n = \sum_{k = 1}^n y^{k,n}(t)\mathbbm{1}_{[(k-1)T/n,kT/n)}(t),
\end{align*}
\noindent and take $C$ such that a.s. $\vert\vert \gamma(t,z_{\wedge t})\vert\vert_2 < C$, for all $z$ satisfying $\vert\vert z-y\vert\vert_\infty < M$. Note that solutions to $(3.6)$ have the form $z^{k,n}(t) = y(kT/n)-\int_t^{kT/n}\gamma^{k-1,n}(s,z^{k,n}_{\wedge s})\,\mathrm{d}s = y(kT/n)-\int_t^{kT/n}\gamma(s,y_{\wedge s})-\int_t^{kT/n}(\gamma^{k-1,n}(s,z^{k,n}_{\wedge s})-\gamma(s,y_{\wedge s}))\,\mathrm{d}s$. Moreover, since $y$ is uniformly continuous, there exists $N_1 \geq 1$ such that for $n \geq N_1$, if $|t-s| < 1/n$, then $\vert y(s)-y(t)\vert < \epsilon$. Furthermore for any $\epsilon > 0$, there also exists $N_2 \geq 1$ such that for any interval $I$ of length less than $1/N_2$, $\int_I g(t)\,\mathrm{d}t < \epsilon$. Take $N = N_1 \lor N_2$, and note that for $k = 1$,

\begin{align*}
    \vert\vert z^{1,n}(t)-y(t)\vert\vert_2 \leq \epsilon+C/n+\vert\vert z^{1,n}-y\vert\vert_\infty \epsilon,
\end{align*}
\noindent implies that,
\begin{align*}
    \vert\vert z^{1,n}-y\vert\vert_\infty(1-\epsilon) \leq \epsilon+C/n,
\end{align*}

\noindent and that for $\epsilon$ small enough this implies $\vert\vert z^{1,n}-y\vert\vert_\infty < M$. Assume, for the induction hypothesis, that this is true for $k =1,...,m$, then for $m + 1$,

\begin{align*}
    \vert\vert z^{1,m+1}-y\vert\vert_\infty &\leq \epsilon+C/n+(M \lor \vert\vert z^{1,m+1}-y\vert\vert_\infty)\int_I g(t)\,\mathrm{d}t.
\end{align*}

Regardless of which of the two values the maximum may take, by taking $\epsilon$ small enough this implies $\vert\vert z^{1,m+1}-y\vert\vert_\infty < M$, which in turn implies $\gamma^{k-1,n}(t,y^n_{\wedge t}) < C$. This common bound can then be used to see that $\vert\vert y-y^n\vert\vert_\infty < \epsilon+C/n+2C\epsilon \to 0$, uniformly in $n$.

\noindent Finally, define $f^{k,n}:\mathbb{R}^d \to \mathbb{R}$, via $f^{k,n}(z) := F(kT/n,y_{\wedge kT/n}^{z-y(kT/n)})$, and note that $\nabla f^{k,n}(z) = \nabla F(kT/n,y_{\wedge kT/n}^{z-y(kT/n)})$. Next,

\begin{align*}
    F(kT/n&,y^{k,n}_{\wedge \frac{kT}{n}})-F((k-1)T/n,y^{k-1,n}_{\wedge (k-1)T/n})\\
    =  &F(kT/n,y^{k,n}_{\wedge \frac{kT}{n}^-})-F((k-1)T/n,y^{k-1,n}_{\wedge (k-1)T/n})\\
    &+f^{k,n}\left(y((k+1)T/n)-\int_{Tk/n}^{T(k+1)/n} \gamma(t,y^{k+1,n}_{\wedge t})\,\mathrm{d}t\right)-f^{k,n}(y(kT/n))\\
    = &\int_{(k-1)T/n}^{kT/n} D^\gamma F(t,y^{k,n}_{\wedge t})\,\mathrm{d}t
    +\int_{kT/n}^{(k+1)T/n} \nabla F(kT/n,y_{\wedge kT/n}^{z-y(kT/n)})\,\mathrm{d}y(x)\\
    &-\int_{kT/n}^{(k+1)T/n} \langle \nabla F(kT/n,y_{\wedge kT/n}^{z-y(kT/n)}),\gamma(t,y_{\wedge t})\rangle\,\mathrm{d}t.
\end{align*}

\noindent Since $F(T,y^{n,n}_{\wedge T}) \to F(T,y_{\wedge T})$, and since

\begin{dmath*}
F(T,y^{n,n}_{\wedge T})-F(0,y^{0,n}_{\wedge 0}) = \sum_{k = 1}^n  \left(F(kT/n,y^{k,n}_{\wedge \frac{kT}{n}})-F((k-1)T/n,y^{k-1,n}_{\wedge (k-1)T/n})\right),
\end{dmath*}

\noindent it follows that:

\begin{align}
    F(T&,y^{n,n}_{\wedge T})-F(T/n,y^{1,n}_{\wedge T/n}) \nonumber\\
    = &\int_{0}^T \sum_{k = 2}^n D^\gamma F(t,y^{k,n}_{\wedge t})\mathbbm{1}_{[(k-1)T/n,kT/n)}(t)\,\mathrm{d}t\nonumber\\
    &+\int_{T/n}^T \sum_{k = 2}^{n-1} \nabla F(kT/n,y_{\wedge kT/n}^{z-y(kT/n)})\mathbbm{1}_{[kT/n,(k+1)T/n)}(t)\,\mathrm{d}y(z)\nonumber\\
    &+\int_{T/n}^T \sum_{k = 2}^{n-1} \langle\nabla F(kT/n,y_{\wedge kT/n}^{z-y(kT/n)}),\gamma(t,y^{k+1,n}_{\wedge t})\rangle\mathbbm{1}_{[kT/n,(k+1)T/n)}(t)\,\mathrm{d}t.
\end{align}

Since $y_{k,n}(t) \to y(t)$ uniformly in $t$, the $\gamma$-derivative is fixed-times continuous and the space derivatives are left-continuous, by taking the limit as $n \to \infty$, the identity $(3.6)$ turns into:

\begin{dmath*}
 F(T,y_{\wedge T})-F(0,y_{\wedge 0}) = \int_{0}^T D^\gamma F(t,y^{k,n}_{\wedge t})\,\mathrm{d}t+\int_0^T \nabla F(t,y_{\wedge t^-})\,\mathrm{d}y(t)+\int_0^T \langle\nabla F(t,y_{\wedge t^-}),\gamma(t,y_{\wedge t^-})\rangle\,\mathrm{d}t.
\end{dmath*}

If the time $s$ at which the function is to be extended is different from $t = 0$, it is enough to redefine $\bar{y} = x\otimes_s y$, and $F_s:[0,T-s]\times D([0,T-s];\mathbb{R}^d)$ such that $F_s(t,w) = F(t+s, x\otimes_s w)$ and the conclusion of the above theorem continues to hold. Thus under the existence of a continuous at fixed times $\gamma$-derivative and a left-continuous, in time, space derivative, all boundedness-preserving, the following relationship is obtained:

\begin{dmath}
F(t+h,x_{\wedge t})-F(t,x_{\wedge t}) = \int_t^{t+h} D^\gamma F(s,x_{\wedge t})\,\mathrm{d}s-\int_t^{t+h}\langle \nabla F(s,x_{\wedge t}, \gamma(s,x_{\wedge t^-}))\,\mathrm{d}s.
\end{dmath}
\end{proof}

Note that $(3.8)$ allows to write extensions along fixed paths as an absolutely continuous function with respect to the Lebesgue measure, which is the property used in the proof of the functional It\^{o} formula. Moreover, if $D^\gamma, \nabla F$, and $\gamma$ are right continuous then the horizontal derivative exists and is given by $(3.3)$\\

\section{Functional It\^{o} Formula for L\'{e}vy Processes}

The main focus of this section is on functionals of L\'{e}vy processes, starting with processes driven by L\'{e}vy type integrals with path dependent coefficients. A few authors have previously derived It\^{o} type formulas for this case e.g., \cite{MemoryJumps}, and \cite{LEVENTAL13}, where the former deals with functionals of functions in $L^p$, while the later recovers a functional It\^{o} formula for the case of a general semimartingale. Additionally, \cite{ConvexRisk} studies L\'{e}vy processes with finite second moment, and applies a functional It\^{o} formula to the case of the exponential process.\\

From now on, we deal with measures $\mathbf{P}$ for which almost surely, the process 
\begin{align*}
    (X(t))_{t \in [0,T]} = (X^1(t),X^2(t),..,X^d(t))_{t \in [0,T]},
\end{align*}
\noindent can be written as a  L\'{e}vy type integral, e.g., \cite{applebaum_2009}:

\begin{multline}
X^i(t)-X^i(0) = \int_0^t G^i(s)\,\mathrm{d}t+\int_0^t L^i_j(s)\mathrm{d}B^j(s)
+\int_0^t\int_{\{||x||_2 > 1\}} K^i(t,x)\,N(dt,dx)\\
+\int_0^t\int_{\{||x||_2 \leq 1\}} H^i(t,x)\,\tilde{N}(dt,dx) \,\,\text{$\mathbf{P}$-a.s.}
\end{multline}

Above, under the probability measure $\mathbf{P}$, $B = (B^1,...,B^m)$, $m \leq d$, is a multidimensional Brownian motion with independent components, $N$ is a L\'{e}vy process with triplet $(0,0,\nu)$, $\tilde{N}$ its corresponding compensated process, and both the vector $G = (G^1,...,G^d)$ and the matrix $(L^i_j)_{1\leq i\leq d, 1\leq j\leq m}$ have predictable entries, all adapted to $(\mathcal{F}_t)_{t \in [0,T]}$, satisfying:

\begin{dmath*}
P\left(\int_0^T \vert|G(s)\vert\vert_1\,\mathrm{d}s < \infty \right) = P\left(\int_0^T \vert\vert L(s)\vert\vert_F^2\,\mathrm{d}s < \infty\right) = 1
\end{dmath*},

\noindent where $\vert\vert\cdot\vert\vert_1$ is the usual $\ell_1$-norm, and $\vert\vert\cdot\vert\vert_F$ is the Frobenius norm of a matrix. Similarly, $K = (K^1,...,K^d),H = (H^1,...,H^d)$ are predictable $\mathcal{F}_t \otimes \mathcal{B}(\mathbb{R}^d)$-adapted processes, with $H$ such that:

\begin{dmath*}
P\left(\int_0^T\int_{\mathbb{R}^d \setminus \{0\}} \vert\vert H(t,x)\vert\vert_2^2\,\nu(dx)\mathrm{d}t < \infty \right)=1.
\end{dmath*}

Once $(X(t))_{t \in [0,T]}$ is defined, one can apply the functional It\^{o} formula for semimartingales from \cite{Barcelona}, to obtain that if $F$ is a $C^{1,2}$-functional, then $\mathbf{P}$-a.s.:

\begin{dmath}
F(T,X_{\wedge T})-F(0,X_{\wedge 0}) = \int_0^T DF(t,X_{\wedge t})\,\mathrm{d}t+\int_0^T \nabla F(t,X_{\wedge t^-})\cdot\mathrm{d}X(t)+\frac{1}{2}\int_0^T Tr(\nabla^2 F(t,X_{\wedge t}) \mathrm{d}[X]^c(t))\\
+\sum_{t \in [0,T]} \left(F(t,X_{\wedge t})-F(t,X_{\wedge t^-})-\langle\nabla F(t,X_{\wedge t^-}),\Delta X(t)\rangle\right).
\end{dmath}

Moreover, since elements in $[0,T]\otimes D([0,T], \mathbb{R}^d)$ can be seen as triplets in $[0,T]\otimes D([0,T]);\mathbb{R}^d)\otimes \mathbb{R}^d$  such that $(t,x) \simeq (t,x_{\wedge t^-},x(t))$, and the sample space in the current setting is given by $D([0,T],\mathbb{R}^d)$, then classical arguments such as those in \cite[Section 4]{applebaum_2009} ensure that if

\begin{dmath}
\sup_{t \in [0,T]}\sup_{||x||_2 \leq 1} \vert\vert H(t,x)\vert\vert_2 < \infty,
\end{dmath}

 \noindent then $(4.2)$ can be written in a way that showcases in a more direct manner the components of the L\'{e}vy integral process:

\begin{align}
F(T,X_{\wedge T})-F(0,X_{\wedge 0}) = &\int_0^T DF(t,X_{\wedge t})\,\mathbb{d}t+\int_0^T\langle \nabla F(t,X_{\wedge t^-}),G(t)\rangle\,\mathrm{d}t+\int_0^T \nabla F(t,X_{\wedge t^-})^TL(t)\,\mathrm{d}B(t)\nonumber\\
&+\frac{1}{2}\int_0^T Tr(\nabla^2 F(t,X_{\wedge t}) L(t)L^T(t))\,\mathrm{d}t+\int_0^T\int_{||x||_2 > 1} \left(F(t,X_{\wedge t^-}^{K(t,x)})-F(t,X_{\wedge t^-})\right)\,N(dt,dx)\nonumber\\
&+\int_0^T\int_{||x||_2 \leq 1} \left(F(t,X_{\wedge t^-}^{H(t,x)})-F(t,X_{\wedge t^-})\right)\,\tilde{N}(dt,dx)\nonumber\\
&+\int_0^T \int_{||x||_2 \leq 1}\left(F(t,X_{\wedge t^-}^{H(t,x)})-F(t,X_{\wedge t}^-)-\langle \nabla F(t,X_{\wedge t^-}),H(t,x)\rangle\right) \,\nu(dx)\mathrm{d}t \condition{$\mathbf{P}$-a.s.}
\end{align}

If the measure $\mathbf{P}$ is such that $(X(t))_{t \in [0,T]}$ is a multivariate L\'{e}vy process with triplet $(\mu,\Sigma,\nu)$, then:

\begin{dmath}
X(t) = \mu t+\Sigma^{1/2} B(t)+\int_0^t\int_{||x||_2 > 1} x\, N(dt,dx)+\int_0^t\int_{||x||_2 \leq 1} x\, \tilde{N}(dt,dx),
\end{dmath}

\noindent where, if $m:= rank(\Sigma)$, $B$ is an $m$-dimensional standard Brownian motion, and $\Sigma^{1/2}$ is a $d\times m$ matrix such that $\Sigma = \Sigma^{1/2}(\Sigma^{1/2})^T$. From Theorem \ref{Functional Ito Formula}, if $F \in C^{1,2}$ it follows that:

\begin{align}
F(T&,X_{\wedge T})-F(0,X_{\wedge 0})\nonumber\\
= &\int_0^T DF(t,X_{\wedge t})\,\mathrm{d}t+\int_0^T \langle \nabla F(t,X_{\wedge t}),\mu\rangle\,\mathrm{d}t+\int_0^T  \nabla F(t,X_{\wedge t^-})^T \Sigma^{1/2}\,\mathrm{d}B(t)\nonumber\\
&+\frac{1}{2}\int_0^T Tr(\nabla^2 F(t,X_{\wedge t}) \Sigma)\,\mathrm{d}t
+\int_0^T\int_{||x||_2 > 1} \left(F(t,X_{\wedge t^-}^x)-F(t,X_{\wedge t^-})\right)\,N(dt,dx)\nonumber\\
&+\int_0^T\int_{||x||_2 \leq 1} \left(F(t,X_{\wedge t^-}^x)-F(t,X_{\wedge t^-})\right)\,\tilde{N}(dt,dx)\nonumber\\
&+\int_0^T\int_{||x||_2 \leq 1} \left(F(t,X_{\wedge t^-}^x)-F(t,X_{\wedge t^-})-\langle\nabla F(t,X_{\wedge t^-}),x\rangle\right) \, \nu(dx)\mathrm{d}t.
\end{align}

The main objective of the forthcoming results is to relax the vertical differentiabilty conditions on $F$ and the convergence requirements in the last integral from $(4.6)$, using functional analogues of the argument in \cite{Eisenbaum09}. With this objective in mind, the following definition is recalled.

\begin{definition}[Integral with respect to local time]

Let $F:\mathbb{R}^d \to \mathbb{R}$, and let $(B(t))_{t \in [0,T]}$ be a standard Brownian motion. The integral with respect to the local time measure $dL^x_s(B)$ is given by:

\begin{align*}
    \int_0^t\int_{\mathbb{R}}f(x(s)|_{x^j(s) = x})\,\mathrm{d}L^x_s(B) &= \int_0^t f(x(s)|_{x^j(s) = B(s)})\,\mathrm{d}B(s)+\int_{T-t}^T f(\hat{x}(s)|_{\hat{x}^j(s) = \hat{B}(s)})\,\mathrm{d}\hat{B}(s),
\end{align*}

\noindent where for any $x = (x^1,...,x^d) \in \mathbb{R}^d$, $x|_{x^j = y} = (x^1,...,x^{j-1},y,x^{j+1},...,x^d)$, and\\
$\hat{x}(t) = (\hat{x}^1(t),...,\hat{x}^d(t)) = (x^1(T-t),...,x^d(T-t))$.
\end{definition}

If $(B(t))_{t \in [0,T]}$ is a multivariate Brownian motion with independent components, and if the process $(N(t))_{t\in[0,T]}$ is independent of $(B(t))_{t \in [0,T]}$, as in \cite[Section 6]{EISENBAUM} by conditioning with respect to these processes, it can be seen that the following two properties hold true:

\begin{enumerate}[i)]
    \item \begin{align}
    \mathbb{E}\left\vert\int_0^t\int_{\mathbb{R}} f(s,B_s|_{B_s^i = x},N_s)\,\mathrm{d}L^x_s(B^i)\right\vert \leq \vert\vert f\vert\vert_{L,i},
    \end{align}
    \noindent where
\begin{align*}
    \vert\vert f\vert\vert_{L,i} &= 2\mathbb{E}\left[\int_0^T f^2(t,B(t),N(t))\,\mathrm{d}t\right]^{1/2}+\mathbb{E}\int_0^T \frac{\vert f(t,B(t),N(t))B^i(t)\vert}{t}\,\mathrm{d}t.
\end{align*}
\item\begin{align}
    \int_0^t\int_{\mathbb{R}} f(s,B_s|_{B_s^i = x},N_t)\,\mathrm{d}L^x_s(B^i) = -\int_0^t \frac{\partial f}{\partial x^i}(s,B_s,N_s) \,\mathrm{d}s.
\end{align}
\end{enumerate}

Note that given any L\'{e}vy process $(X(t))_{t \in [0,T]}$, with triplet $(\mu,\Sigma,\nu)$, and $B$ as its Brownian component as in $(4.5)$, then $(B(t))_{t \in [0,T]}$ is $\sigma((X(t))_{t \in [0,T]})$ measurable, and thus $(N(t))_{t \in [0,T]}$ such that $N(t) = X(t)-\Sigma^{1/2}B(t)$ is measurable as well. Moreover, given a differentiable function $f: \mathbb{R}^d \to \mathbb{R}$, then $\tilde{f}: \mathbb{R}^m\to \mathbb{R}$ given by $\tilde{f}(x) := f(\Sigma^{1/2}x+N(t))$ is such that $\tilde{f}(B(t)) = f(X(t))$, and clearly $\nabla \tilde{f} = (\Sigma^{1/2})^T \nabla f$. Having established notation, the following operators, where $C(\mathbb{R}^m,\mathbb{R})$ is the set of continuous functions from $\mathbb{R}^m$ to $\mathbb{R}$ and where $R = ((\Sigma^{1/2})^T\Sigma^{1/2})^{-1}(\Sigma^{1/2})^T$, can be defined.

\begin{definition}
For $F \in C(\mathbb{R}^m,\mathbb{R})$, and for $i \in \{1,...,m\}$, let $I_i: C(\mathbb{R}^m,\mathbb{R}) \to C(\mathbb{R}^m,\mathbb{R})$, $\mathcal{A}_i: C(\mathbb{R}^m,\mathbb{R})\to C(\mathbb{R}^m,\mathbb{R})$, and $\mathcal{L}: C(\mathbb{R}^m,\mathbb{R}) \to C^{0,0}$ be defined via.

\begin{enumerate}
    \item $I_iF(x) := \int_0^{x^i} F(x|_{x^i = y})\,\mathrm{d}y$,\\
    \item $IF(x) := (I_1 F(x),...,I_mF(x))$,\\
    \item $\mathcal{A}_iF(x) :=  \frac{\partial^2F}{\partial x_i^2} (x)+\int_{\vert\vert y\vert\vert_2 \leq 1}\int_0^1\left(\frac{\partial F}{\partial x_i}(x+sRy)-\frac{\partial F}{\partial x_i}(x)\right)(Ry)_i\,\nu(\mathrm{d}y)$,\\
    \item $\mathcal{A}F(x) := (\mathcal{A}_1F_1(x),...,\mathcal{A}_mF_m(x))$,\\
    \item $\mathcal{L}_tF(B_{\wedge t}) := \sum_{i = 1}^m \int_0^t\int_{\mathbb{R}} F_i(B(t)|_{B^i(t) = x})\,\mathrm{d}L^x_s(B^i)$.
\end{enumerate}
\end{definition}

Next, we present an extension of \cite[Theorem 1.1]{Eisenbaum09} to the multivariate case:

\begin{Theorem}
Let $(X(t))_{t \in [0,T]}$ be a multivariate L\'{e}vy Process with triplet $(\mu,\Sigma,\nu)$, let $Q: \mathbb{R}^d\to \mathbb{R}^d$ be the orthogonal projection onto the range of $\Sigma^{1/2}$, let $F:\mathbb{R}^d \to \mathbb{R}$ be continuously differentiable, and let 

\begin{align*}
    \int_{\vert\vert x\vert\vert_2 \leq 1} \vert\vert (Id-Q) x\vert\vert_2\,\nu(\mathrm{d}x) < \infty.
\end{align*} 
\noindent Then,

\begin{align}
    F(X(t))-F(0) = &\int_0^t \nabla F(X(s))^T \Sigma^{1/2}\cdot\mathrm{d}B(s)+\int_0^t \langle \nabla F(X(s^-)),\mu\rangle\,\mathrm{d}s\nonumber\\
    &+\int_0^t\int_{\vert\vert y\vert\vert_2 \leq 1} \left(F(X(s^-+y))-F(X(s^-))\right)\,\mathrm{d}\tilde{N}(s,y)\nonumber\\
    -\mathcal{L}_t\mathcal{A}I\tilde{F}(t,X_{\wedge t}))\nonumber\\
    &+\int_0^t\int_{\vert\vert y\vert\vert \leq 1}\left(F(X(s^-+y))-F(X(s^-+Qy))
    -\langle\nabla F(X(s^-)),(I-Q)y\rangle\right)\,\nu(\mathrm{d}y)\mathrm{d}s.
\end{align}
\end{Theorem}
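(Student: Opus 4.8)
The plan is to reduce the multivariate Lévy statement to the one-dimensional case of Eisenbaum--Walsh \cite{Eisenbaum09} by pushing everything through the map $x \mapsto \Sigma^{1/2}x + N(t)$, applied coordinate by coordinate in the $m$-dimensional Brownian variable. First I would decompose $X$ as in $(4.5)$: write $X(t) = \mu t + \Sigma^{1/2}B(t) + N(t)$, where $N$ is the pure-jump part and $B$ the $m$-dimensional standard Brownian motion, both measurable with respect to $\sigma(X)$ as noted just before Definition 4.3. The key observation is that $F(X(t)) = \tilde F_t(B(t))$ with $\tilde F_t(x) = F(\Sigma^{1/2}x + N(t))$, so that $\nabla \tilde F_t = (\Sigma^{1/2})^T \nabla F$. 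I would then want to apply, for each fixed realization of the jump part $N$ (using the independence of $B$ and $N$ and conditioning, exactly as in the cited \cite[Section 6]{EISENBAUM}), the one-dimensional local-time Itô formula of \cite{Eisenbaum09} successively in each Brownian coordinate $B^i$, $i = 1,\dots,m$.

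The main structural step is to track the second-order/jump terms. For the continuous (Brownian) part, the classical Itô formula would produce $\tfrac12 \int_0^t \mathrm{Tr}(\nabla^2 \tilde F_s(B(s))\,ds) = \tfrac12 \int_0^t \mathrm{Tr}(\nabla^2 F(X(s))\,\Sigma)\,ds$; the Eisenbaum--Walsh identity $(4.9)$ lets me rewrite each $\partial^2_{ii}$ term as $-\int_0^t\!\int_{\mathbb R} (\text{first derivative})\,dL^x_s(B^i)$, i.e. as a local-time integral, thereby trading a second derivative of $F$ for a first derivative integrated against $dL^x_s(B^i)$. This is precisely what the operator $\mathcal L$ packages in Definition 4.4, and the antiderivative operator $I_i$ and the matrix $R = ((\Sigma^{1/2})^T\Sigma^{1/2})^{-1}(\Sigma^{1/2})^T$ (the left-inverse of $\Sigma^{1/2}$) appear because one must express $\partial/\partial x^i$ in the Brownian coordinates in terms of $\nabla F$ in the original coordinates: since $B(s) = R(X(s) - \mu s - N(s))$, a change of variable $y \mapsto Ry$ converts the jump measure $\nu$ on $\mathbb R^d$ into the correction integrals appearing in $\mathcal A_i$. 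I would handle the jumps by splitting, as usual, into large jumps ($\|y\|_2 > 1$, finitely many, handled pathwise) and small compensated jumps ($\|y\|_2 \le 1$); for the small jumps the decomposition $y = Qy + (Id - Q)y$ separates the part $Qy$ lying in the range of $\Sigma^{1/2}$ (which interacts with the Brownian coordinates and feeds into the $\mathcal A_i$ correction) from the orthogonal part $(Id-Q)y$, whose contribution is exactly the last integral in $(4.10)$ and which is finite by the hypothesis $\int_{\|x\|_2\le 1}\|(Id-Q)x\|_2\,\nu(dx) < \infty$.

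The hardest part will be justifying the interchange of the coordinatewise one-dimensional local-time Itô formulas with the jump decomposition, and in particular verifying that the local-time integrals $\int_0^t\!\int_{\mathbb R} (\cdots)\,dL^x_s(B^i)$ are well defined and that the sum $\mathcal L_t \mathcal A I \tilde F$ converges. For this I would invoke property $(4.7)$: the bound $\|\cdot\|_{L,i}$ controls the $L^1$-norm of each local-time integral, and one checks that $\mathcal A_i I_i \tilde F_t$, which after the antiderivative $I_i$ is only one space-derivative away from $F$, satisfies the requisite integrability once $F \in C^1$ and the small-jump correction $\int_{\|y\|_2\le 1}\int_0^1 (\partial_i F(x+sRy) - \partial_i F(x))(Ry)_i\,\nu(dy)$ is finite — again using $\int_{\|x\|_2\le 1}\|(Id-Q)x\|_2\,\nu(dx)<\infty$ together with local boundedness of $\nabla F$ to dominate the small-jump remainder. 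Finally I would reassemble: the drift term $\int_0^t \langle \nabla F(X(s^-)),\mu\rangle\,ds$ and the stochastic integral $\int_0^t \nabla F(X(s))^T\Sigma^{1/2}\,dB(s)$ come directly from the first-order terms; the compensated small-jump integral over $\tilde N$ and the Lévy correction integral over $\nu$ come from the jump part after the $Q$/$(Id-Q)$ split; and the local-time term $-\mathcal L_t\mathcal A I\tilde F$ collects all the second-order Brownian contributions together with the $Qy$-part of the small-jump corrections, yielding $(4.10)$.
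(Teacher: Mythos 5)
Your high-level map of where each term in $(4.9)$ comes from is essentially right --- the decomposition $X(t)=\mu t+\Sigma^{1/2}B(t)+N(t)$, the role of $R$ and of the $Qy$ versus $(Id-Q)y$ split, the use of $(4.7)$ to control the local-time integrals, and the finiteness hypothesis for the last term all match the paper. But there is a genuine gap at the very first step: $F$ is only assumed continuously differentiable, so you cannot invoke ``the classical It\^{o} formula'' for $F$ (or for $\tilde F_t$) and then ``rewrite each $\partial^2_{ii}$ term'' as a local-time integral --- those second derivatives need not exist, and producing a formula that avoids them is the entire content of the theorem. The paper's proof (following \cite{Eisenbaum09}) therefore begins by mollifying: it sets $F_n := F\ast\phi_n$, applies the classical multivariate It\^{o} formula to the smooth $F_n$ to get $(4.10)$, uses the identities $(4.7)$--$(4.8)$ to convert both $\sum_j\int_0^t\partial_j^2\tilde F_n(B_s)\,\mathrm{d}s$ and the $Qy$-part of the small-jump compensator (via the auxiliary functions $H^i_n$, a Fubini computation, and the antiderivatives $I_i\tilde F_n$) into local-time integrals of quantities involving only first derivatives, and only then passes to the limit $n\to\infty$ term by term, using the bound $\vert\vert\cdot\vert\vert_{L,i}$, a stopping-time localization of $B$ and $X$, and dominated convergence for the $(Id-Q)y$ term. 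Your proposal omits this approximation scheme entirely, and without it the argument is circular: you would be using second derivatives of $F$ to derive a formula whose purpose is to dispense with them.

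The second problem is the proposed reduction to the one-dimensional Eisenbaum--Walsh theorem ``successively in each Brownian coordinate.'' That theorem represents $F(X(t))-F(0)$ for a scalar function of a scalar L\'{e}vy process; for a jointly multivariate $F$ you cannot iterate it coordinate by coordinate, because freezing the remaining coordinates while applying the one-dimensional formula in coordinate $i$ ignores the fact that those coordinates are themselves semimartingales over the same time interval, so the telescoping does not close. What is true --- and what the paper actually uses --- is that the identities $(4.7)$ and $(4.8)$ remain valid coordinatewise for the independent components $B^1,\dots,B^m$ (by conditioning on $N$ as in \cite[Section 6]{EISENBAUM}); this is much weaker than an iterated application of the full one-dimensional theorem, and it is applied to the mollified $F_n$, not to $F$ itself. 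So the correct skeleton is: mollify, apply the multivariate It\^{o} formula once to $F_n$, trade the diagonal second derivatives and the $Qy$-compensator for local-time integrals via $(4.8)$, and take limits using $(4.7)$.
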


\begin{proof}
As in \cite[Theorem 1.1]{Eisenbaum09}, let $(\phi_n)_{n \geq 1}$, $\phi_n: \mathbb{R}^d\to \mathbb{R}$, be a sequence of mollifiers. Then $F_n := F\ast \phi_n$, and $\tilde{F}_n$, are sequences of continuously differentiable, locally bounded, functions such that $\lim_{n \to \infty} F_n(x) = F(x)$, $\lim_{n \to \infty} \tilde{F}_n(x) = \tilde{F}(x)$, $\lim_{n \to \infty} \nabla F_n(x) = \nabla F(x)$, and $\lim_{n \to \infty} \nabla \tilde{F}_n = \nabla \tilde{F}_n(x)$. Applying It\^{o}'s formula to the approximations $F_n$ gives:

\begin{align}
    F_n(X(t))-F_n(0) = &\int_0^t \nabla F_n(X(t))\cdot\mathrm{d}B(s) +\int_0^t  \langle \nabla F_n(X(s)),\mu\rangle\,\mathrm{d}s+\frac{1}{2}\int_0^t Tr(\nabla^2 F_n(X(s))\Sigma)\,\mathrm{d}s\nonumber\\
    &+\int_0^t \int_{\vert\vert y\vert\vert_2 \leq 1} \left(F_n(X(s^-+y)-F_n(X(s^-))\right) \,\tilde{N}(ds,dy)\nonumber\\
    &+\sum_{s \in [0,t]}\left(F_n(X(s))-F_n(X(s^-))\right)\mathbbm{1}_{\{\vert\vert \Delta X(s)\vert\vert_2 > 1\}}\nonumber\\
    &+\int_0^t\int_{\vert\vert y\vert\vert_2 \leq 1} \left(F_n(X(s^-)+y)-F_n(X(s^-))-\langle \nabla F_n(X(s^-)),y\rangle\right) \,\nu(dy)\mathrm{d}t.
\end{align}

Next, as $n \to \infty$, $F_n(X(t))-F_n(0) \to F(X(t))-F(0)$. Then, since $F$, and $\nabla F$ are continuous, the arguments in \cite{EISENBAUM} allow to conclude that the first four terms on the right-hand side of $(4.10)$ all converge to a corresponding term in $(4.9)$. For the fifth and sixth term, using a stopping time argument, $B$ will be assumed to be bounded a.s. Next, as in \cite{Eisenbaum09}, the convergence of the second derivative term depends only on properties $(4.7)$, and $(4.8)$, both still satisfied in the multivariate case, and therefore,

\begin{dmath*}
    \lim_{n \to \infty} \sum_{j = 1}^m\int_0^t \partial_j^2\tilde{F}_n(B_s)\,\mathrm{d}s =  -\mathcal{L}_t \left(\frac{\partial^2 I_1F_n}{\partial x_1^2},...,\frac{\partial^2 I_mF_n}{\partial x_m^2}\right)(B_{\wedge t})
\end{dmath*}.

\noindent Next,

\begin{align*}
    &\int_0^t\int_{\vert\vert y\vert\vert_2 \leq 1} \left(F_n(X_{s^-}+Qy)-F_n(X_{s^-})-\langle \nabla F_n(X_{s^-}),Qy\rangle\right) \,\nu(dy)\mathrm{d}t\\
    &= \int_0^t\int_{\vert\vert y\vert\vert_2 \leq 1} \left(\tilde{F}_n(B_s+Ry)-\tilde{F}_n(B_s)-\langle \nabla \tilde{F}_n(B_s),Ry\rangle\right) \,\nu(dy)\mathrm{d}t\\
    &=\int_0^t\int_{\vert\vert y\vert\vert_2 \leq 1}\int_0^1 \left(\langle \nabla \tilde{F}_n(B_s+sRy),Ry\rangle-\langle\nabla \tilde{F}_n(B_s),Ry\rangle\right)\,\mathrm{d}s\nu(dy)\mathrm{d}t.
\end{align*}

\noindent Then, define

\begin{align}
    H^i_n(B_s|_{B^i_s = x}) := \int_0^x \int_{\vert\vert y\vert\vert_2 \leq 1}\int_0^1\left(\frac{\partial \tilde{F}_n}{\partial x_i}(B(s)|_{B^i(s) = z}+sRy)(Ry)_i-\frac{\partial \tilde{F}_n}{\partial x_i}(B(s)|_{B^i_s = z})(Ry)_i\right)\,\mathrm{d}s\nu(\mathrm{d}y)\mathrm{d}z,
\end{align}

\noindent thus as in \cite{Eisenbaum09}:

\begin{align*}
    -\int_0^t&\int_{\mathbb{R}}H^i_n(s,B_s|_{B^i_s = x})\,\mathrm{d}L^x_s(B^i)= \int_0^t\int_{\vert\vert y\vert\vert_2 \leq 1}\int_0^1\left(\frac{\partial \tilde{F}_n}{\partial x_i}(B_s+sRy)(Ry)_i-\frac{\partial \tilde{F}_n}{\partial x_i}(B_s)(Ry)_i\right)\,\mathrm{d}s\nu(\mathrm{d}y).
\end{align*}

Since $F$ is continuous, since $B$ can be taken to be bounded using a stopping time, and since the integrand in the right hand side of $(4.11)$ is restricted to $\vert\vert y\vert\vert_2 \leq 1$, the expression inside the last integral is bounded. Moreover, by Fubini's Theorem:

\begin{align*}
    H^i_n(B_s|_{B^i_s = x}) = &\int_{\vert\vert y\vert\vert_2 \leq 1}\int_0^x \left(\tilde{F}_n(B_s|_{B^i_s = z}+Ry)-\tilde{F}_n(B_s|_{B^i_s = z})-\frac{\partial \tilde{F}_n}{\partial x_i}(B_s|_{B^i_s = z})(Ry)_i\right)\,\mathrm{d}z\mathrm{d}s\nu(\mathrm{d}y)\\
    = &\int_{\vert\vert y\vert\vert_2 \leq 1}\int_0^{x+(Ry)_i}\tilde{F}_n(B_s|_{B^i_s = z}+(Ry)|_{(Ry)_i = 0})\,\mathrm{d}z-\int_0^x \tilde{F}_n(B_s|_{B^i_s = z})\,\mathrm{d}z\\
    &-(Ry)_i\tilde{F}_n(B_s|_{B^i_s = x})+(Ry)_i\tilde{F}_n(B_{s^-}|_{B^i_s = 0})-\int_0^{(Ry)_i}\tilde{F}_n(B_s|_{B^i_s = z}+Ry|_{(Ry_i = 0)})\mathrm{d}z\,\mathrm{d}s\nu(\mathrm{d}y).
\end{align*}

Thus, if $G^i_n(x) = \int_{\vert\vert y\vert\vert_2 \leq 1} \left(I_i \tilde{F}_n(x+Ry)-I_i \tilde{F}_n(x)-(Ry)_i\tilde{F}_n(x)\right)\,\nu(\mathrm{d}y)$, the following identity remains true.

\begin{align*}
   \int_0^t\int_{\mathbb{R}}H^i_n(B_s|_{B^i_s = x})\,\mathrm{d}L^x(B^i) &= \int_0^t\int_{\mathbb{R}}G^i_n(B_s|_{B^i_s = x})\,\mathrm{d}L^x(B^i). 
\end{align*}

Then, the limit can be taken as in the single variable case, since the derivatives are continuous. Finally, the only term whose convergence has not been verified in $(4.10)$ is:
\begin{align}
\begin{split}
    &\int_{\vert\vert y\vert\vert_2 \leq 1} \left(F_n(X(t^-)+y)-F_n(X(t^-)+Qy)-\langle\nabla F_n(X(t^-)),(I-Q)y\rangle\right)\,\nu(\mathrm{d}y)\\
    =&\int_{\vert\vert y\vert\vert_2 \leq 1}\int_0^1 \left(\langle\nabla F_n(X(t^-)+s(I-Q)y+Qy),(I-Q)y\rangle-\langle\nabla F_n(X(t^-)),(I-Q)y\rangle\right)\,\mathrm{d}s\nu\mathrm{d}y.
\end{split}
\end{align}

Since it can be assumed that $\vert\vert X(t)\vert\vert_\infty < M$ a.s., and since the integrand is restricted to $\vert\vert y\vert\vert_2 \leq 1$ while the derivatives are continuous, there exists $C >0$ such that $\vert\vert \nabla F_n(X(t)+s(I-Q)y+Qy)\vert\vert_\infty < C$, for all $s \in [0,1]$. Therefore, the last term in $(4.10)$ is dominated by:

\begin{align*}
    2C\int_{\vert\vert y\vert\vert_2 \leq 1} \vert\vert (I-Q)y\vert\vert_2 \,\nu(\mathrm{d}y) &< \infty.
\end{align*}

\noindent By dominated convergence, this last term converges to:

\begin{align*}
    \int_0^t \int_{\vert\vert y\vert\vert_2 \leq 1}\left(F(X_{s^-}+y)-F(X_{s^-}+Qy)-\langle\nabla F(X_{s^-}),(I-Q)y\rangle\right)\,\nu(\mathrm{d}y)\,\mathrm{d}s.
\end{align*}.

\noindent Therefore, all the terms in $(4.10)$ converge to their corresponding term in $(4.9)$, concluding the proof.
\end{proof}

The following theorem provides a functional analogue of the above result. It is optimal when the Gaussian component is non-degenerate, since it identifies
\begin{align*}
    F(t,X_{\wedge t})-F(0,X_{\wedge 0})-\int_0^t DF(s,X_{\wedge s})\,\mathrm{d}s-\int_0^t \langle \nabla F(s,X_{\wedge s}),\mu\rangle\,\mathrm{d}s-\int_0^t \nabla F(s,X_{\wedge s^-})^T\Sigma^{1/2}\,\mathrm{d}B(s)
\end{align*}
\noindent with an expression that makes no extra assumptions on $F$.

\begin{Theorem}[Optimal Functional It\^{o} Formula]

Let $F \in C^{1,1}$, and let $ X = (X(t))_{t \in [0,T]}$ be a L\'{e}vy process with triplet $(\mu,\Sigma,\nu)$. Let, $Q$, the orthogonal projection onto the range of $\Sigma^{1/2}$ be such that $\int_{\vert\vert y\vert\vert_2 \leq 1}\vert\vert (I-Q)y\vert\vert_2\,\nu(\mathrm{d}y) < \infty$, and let $G(t,x) := F(t,x_{\wedge t^-}^{x-x(t^-)})$. Then,

\begin{align}
F(t,X_{\wedge t})-F(0,X_{\wedge 0}) = &\int_0^t DF(s,X_{\wedge s})\,\mathrm{d}s+\int_0^t \langle \nabla F(s,X_{\wedge s}),\mu\rangle\,\mathrm{d}s+\int_0^t \nabla F(s,X_{\wedge s^-})^T\Sigma^{1/2}\,\mathrm{d}B(s)\nonumber\\
&+ \sum_{s \leq t} (F(s,X_{\wedge s})-F(s,X_{\wedge s^-}))\mathbbm{1}_{\{\vert\vert\Delta X(s)\vert\vert_2 > 1\}}\nonumber\\
&+\int_0^t \int_{||y||_2 \leq 1} \left(F(s,X_{\wedge s^-}^y)-F(s,X_{\wedge s^-})\right)\,\mathrm{d}\tilde{N}(ds,dy)-\mathcal{L}_t\mathcal{A}I\tilde{G}(B_{\wedge t})\nonumber\\
&+\int_0^t\int_{\vert\vert y\vert\vert_2 \leq 1} \left(F(s,X_{\wedge s^-}^y)-F(s,X_{\wedge s^-}^{Qy})-\langle \nabla F(t,X_{\wedge t^-}),(I-Q)y\rangle\right)\,\nu(\mathrm{d}y)\,\mathrm{d}s.
\end{align}
\end{Theorem}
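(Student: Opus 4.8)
The plan is to reproduce, in the functional setting, the mollification argument behind the preceding (non-functional, multivariate) It\^{o} formula, but starting from the $C^{1,2}$ semimartingale functional It\^{o} formula $(4.6)$ rather than from the classical one. First I would dispose of the horizontal part: since $F\in C^{1,1}$ is left-continuous in time and horizontally differentiable with a boundedness-preserving derivative, the time increments of $F$ along frozen paths integrate to $\int_0^t DF(s,X_{\wedge s})\,\mathrm{d}s$, exactly as in the proofs of Theorems \ref{Functional Ito Formula} and \ref{Frisk-Stratonovich}, which accounts for the first term of $(4.13)$. For the ``vertical'' part I would mollify through the functional $G$, which isolates the dependence of $F$ on the current value: convolving $G(s,\cdot)$ with a mollifier $\phi_n$ in the current-value variable produces functionals $F_n$ that are left-continuous in time, boundedness-preserving, twice space-differentiable with continuous spatial derivatives, and horizontally differentiable, hence $F_n\in C^{1,2}$, with $F_n\to F$ and $\nabla F_n\to\nabla F$ locally uniformly; checking that vertical mollification preserves the horizontal regularity is routine but requires some care. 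Applying $(4.6)$ to $F_n$ and, as usual, separating the jumps of $X$ into large ($\|\Delta X\|_2>1$) and small ones gives the decomposition whose terms are then sent to the limit one by one.

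Localizing by $T_M:=\inf\{t:\|X(t)\|_2>M\}$ one may assume $X$, and hence its Brownian component $B$, a.s.\ bounded. Then the ``easy'' terms — $F_n(t,X_{\wedge t})\to F(t,X_{\wedge t})$, the drift term $\int_0^t\langle\nabla F_n(s,X_{\wedge s}),\mu\rangle\,\mathrm{d}s$, the stochastic integral $\int_0^t\nabla F_n(s,X_{\wedge s^-})^T\Sigma^{1/2}\,\mathrm{d}B(s)$, the finite sum over large jumps, and the compensated small-jump integral $\int_0^t\int_{\|y\|_2\le1}\big(F_n(s,X_{\wedge s^-}^{y})-F_n(s,X_{\wedge s^-})\big)\,\tilde N(\mathrm{d}s,\mathrm{d}y)$ — converge to the corresponding terms of $(4.13)$ using only the continuity of $F$ and $\nabla F$ and the boundedness-preserving property, as in the preceding theorem and in \cite{EISENBAUM}. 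In the small-jump compensator from $(4.6)$ I would split, for each $y$ with $\|y\|_2\le1$, $F_n(s,X_{\wedge s^-}^{y})-F_n(s,X_{\wedge s^-})-\langle\nabla F_n(s,X_{\wedge s^-}),y\rangle$ into its $Qy$-part $F_n(s,X_{\wedge s^-}^{Qy})-F_n(s,X_{\wedge s^-})-\langle\nabla F_n(s,X_{\wedge s^-}),Qy\rangle$ and the remainder $F_n(s,X_{\wedge s^-}^{y})-F_n(s,X_{\wedge s^-}^{Qy})-\langle\nabla F_n(s,X_{\wedge s^-}),(I-Q)y\rangle$; the latter, after one Taylor step, is dominated by $C\,\|(I-Q)y\|_2$ on the compact set swept out by the bounded paths $X_{\wedge s^-}^{y}$, $\|y\|_2\le1$, so the hypothesis $\int_{\|y\|_2\le1}\|(I-Q)y\|_2\,\nu(\mathrm{d}y)<\infty$ and dominated convergence yield the final term of $(4.13)$, exactly as in the preceding proof.

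The main obstacle is the remaining piece: the second-order term $\tfrac12\int_0^t Tr\big(\nabla^2 F_n(s,X_{\wedge s})\,\Sigma\big)\,\mathrm{d}s$ together with the $Qy$-part of the small-jump compensator does not converge separately and must first be rewritten as a local-time integral. Following \cite{Eisenbaum09} and the preceding theorem, I would pass to the Brownian frame: for fixed $s$ and frozen past, $G(s,\cdot)$ is an ordinary function of the current value, and one sets $\tilde G(s,b):=G(s,\Sigma^{1/2}b+N(s))$ with $N(s)=X(s)-\Sigma^{1/2}B(s)$, which is legitimate because for a L\'{e}vy process $B$ and $N$ are independent, so the conditioning argument behind properties $(4.7)$--$(4.8)$ (stated there with an independent process $N$) still applies even though the path-prefix enters $G$. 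The key point is that interposing the antiderivative operators $I_i$ before $\mathcal{A}_i$ leaves only first-order derivatives of $\tilde G_n$ (and $\tilde G_n$ itself): one has $\mathcal{A}_iI_i\tilde G_n=\partial_i\tilde G_n+\int_{\|y\|_2\le1}\int_0^1\big(\tilde G_n(\cdot+sRy)-\tilde G_n(\cdot)\big)(Ry)_i\,\mathrm{d}s\,\nu(\mathrm{d}y)$, so that $(4.8)$ identifies the second-order term plus the $Qy$-compensator with $-\mathcal{L}_t\mathcal{A}I\tilde G_n(B_{\wedge t})$, and since only $\partial_i\tilde G_n$ and $\tilde G_n$ appear, $(4.7)$ gives $\mathcal{L}_t\mathcal{A}I\tilde G_n(B_{\wedge t})\to\mathcal{L}_t\mathcal{A}I\tilde G(B_{\wedge t})$. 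The delicate points are verifying, uniformly in $n$, the measurability and the integrability required by $(4.7)$ for these path-dependent integrands — this is where the localization $T_M$ and the boundedness-preserving hypothesis on $\nabla F$ enter — and keeping careful track of the difference between freezing the past at $s$ and at $s^-$ in the jump terms. Collecting all the limits and letting $M\to\infty$ gives $(4.13)$.
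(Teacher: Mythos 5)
Your route is genuinely different from the paper's. The paper never mollifies the functional: it takes the nested partitions $\tau_n$ from the proof of Theorem 2.3, freezes the path on each subinterval via the piecewise-constant approximations $X^n$, and on $[t^n_{i+1},t^n_{i+2})$ applies the already-established finite-dimensional local-time formula (Theorem 4.3) to the ordinary function $F^n_{i+1}(x)=F\bigl(t,(X^n_{\wedge t^{n-}_{i+1}})^{x-X^n(t^{n-}_{i+1})}\bigr)$ of the current value; the horizontal increments $\int_{t_i}^{t_{i+1}}DF(t,X^n_{\wedge t_i})\,\mathrm{d}t$ appear for free because the frozen path is constant on each subinterval, and the result follows by telescoping and passing to the limit term by term (Lemma 2.2 for the uniform convergence of $F$ and $\nabla F$, Burkholder--Davis--Gundy for the Brownian integral, stochastic dominated convergence for the compensated jump integral, and the bound $(4.7)$ in the $\vert\vert\cdot\vert\vert_{L,j}$ norm for the local-time terms). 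You instead mollify vertically to land in $C^{1,2}$, apply the c\`adl\`ag functional It\^o formula $(4.6)$ to $F_n$, and regroup the second-order term with the $Qy$-part of the small-jump compensator into $-\mathcal{L}_t\mathcal{A}I\tilde G_n$. Your algebraic identity $\mathcal{A}_iI_i\tilde G_n=\partial_i\tilde G_n+\int_{\vert\vert y\vert\vert_2\le1}\int_0^1(\tilde G_n(\cdot+sRy)-\tilde G_n(\cdot))(Ry)_i\,\mathrm{d}s\,\nu(\mathrm{d}y)$ is correct and is the same computation the paper performs inside Theorem 4.3; your route inherits the large/small-jump bookkeeping directly from $(4.6)$, where the paper has to rebuild it through the partition.

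The step you cannot wave through is the application of $(4.7)$--$(4.8)$ to $\tilde G_n$ itself. Those properties are stated for integrands of the form $f(s,B(s),N(s))$, whose dependence on $B$ is only through its current value, with $N$ independent of $B$; the conditioning argument behind them uses exactly this structure. Your $\tilde G_n(s,\cdot)$ depends on the whole prefix $X_{\wedge s^-}$, hence on the past of $B$, and the backward stochastic integral in Definition 4.1 then becomes anticipating for the reversed Brownian motion: $B(u)$ for $u<T-s$ sits in the future of $\hat B$ at reversed time $s$. So neither the existence of $\mathcal{L}_t\mathcal{A}I\tilde G_n(B_{\wedge t})$ nor the identity $(4.8)$ for it is available off the shelf, and this is precisely the difficulty the paper's partition argument is designed to circumvent: the path-dependence of the integrand is frozen at the left endpoint of each subinterval, and the local-time integral of the genuinely path-dependent limit is then \emph{defined} through the Cauchy property in $\vert\vert\cdot\vert\vert_{L,j}$. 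To close your argument you would either have to carry out that discretization anyway --- at which point you have reproduced the paper's proof inside your mollification --- or prove an extension of the Eisenbaum--Walsh integral to path-dependent integrands, which is not ``routine.'' The rest of your outline (localization by $T_M$, the $Q$-splitting of the compensator, domination by $\int_{\vert\vert y\vert\vert_2\le1}\vert\vert(I-Q)y\vert\vert_2\,\nu(\mathrm{d}y)$) matches the paper's treatment of the corresponding terms.
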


\begin{proof}
Let $\tau = \{\tau_n\}_{n \geq 1}$ be a nested sequence of partitions given by stopping times $\tau_n = (t_0^n,...,t^n_{k_n})$, as in the proof of Theorem 2.2. Then, define:

\begin{align*}
X^n(t) := \sum_{i = 0}^{k_n-1} X(t_{i+1}^n)\mathbbm{1}_{[t^n_i,t^n_{i+1})}(t)+X(T)\mathbbm{1}_{\{T\}},
\end{align*}

\noindent together with

\begin{align*}
    F^n_i(x) = F\left(t,(X^n_{\wedge t^{n-}_i})^{x-X^n(t^{n-}_i)}\right).
\end{align*}

\noindent From the construction of $\tau$, $X^{n}(t)$ converges to $X(t^-)$, except at the jump times of $X$. However, as this set has Lebesgue measure $0$, then $\vert\vert X^n(t)-X(t^-)\vert\vert_\infty = \esssup_{t \in [0,T]} \vert\vert X_n(t)-X(t)\vert\vert_2 \to 0$. Since,  $F(\cdot,X_{\wedge \cdot})$ is bounded, and since $F$ is left continuous with respect to $d_*$, Lemma 2.2 ensures that $\vert\vert F(t,X^n_{\wedge t})-F(t,X_{\wedge t})\vert\vert_\infty \to 0$. The same applies to the space derivatives of $F$, ensuring $\vert\vert\nabla F(t,X^n_{\wedge t})-\nabla F(t,X^n_{\wedge t})\vert\vert_\infty \to 0$. Next, from Theorem 4.1:

\begin{align*}
    F(T,X^n_{\wedge T})&-F(0,X^n(0))= \sum_{i = 0}^{k_n-1} \left(F(t_{i+1},X^n_{\wedge t_{i+1}})-F(t_i,X^n_{\wedge t_i})\right)\\
    = &\sum_{i = 0}^{k_n-1} \left(F(t_{i+1},X^n_{\wedge t_{ i+1}})-F(t_{i+1},X^n_{\wedge t_{i+1}^-})\right)+\sum_{i = 0}^{k_n-1} \left(F(t_{i+1},X^n_{\wedge t_{i+1}^-})-F(t_i,X^n_{\wedge t_i})\right)\\
    = &\sum_{i = 0}^{k_n-1} \left(F_{i+1}^n(X(t_{i+2}^-))-F_{i+1}^n(X(t_{i+1}^-))\right)+\int_{t_i}^{t_{i+1}}D F(t,X^n_{\wedge t_i})\,\mathrm{d}t\\
    = &\sum_{i = 0}^{k_n-1} \left(\int_{t_i}^{t_{i+1}} DF(t,X_{\wedge t_i}^n)\,\mathrm{d}t+\int_{t_{i+1}}^{t_{i+2}} \nabla F_{i+1}^n(X( t^-))^T\Sigma^{1/2}\mathrm{d}B(t)\right)\\
    &+\int_{[t_{i+1},t_{i+2})}\int_{||y||_2 \leq 1} \left(F_{i+1}^n(X^n(t^-)+x)-F_{i+1}^n(X^n(t^-))\right)\,\mathrm{d}\tilde{
    N}(t,x)\\
    &+\sum_{s \in [t_{i+1},t_{i+2})} \left(F^n_{i+1}(X^n(s))-F_{i+1}^n(X^n(s^-))\right)\mathbbm{1}_{||\Delta x(s)||_2 > 1}-\mathcal{L}_T\mathcal{A}I\tilde{F}^n_{i+1}\mathbbm{1}_{[t_i,t_{i+1})}(X_{\wedge T})\\
    = &\int_0^T D F(t,X_{\wedge t}^n)\,\mathrm{d}t
    +\int_0^T \sum_{i = 1}^{k_n} \nabla F_i^n(X( t^-))^T\Sigma^{1/2}\mathbbm{1}_{[t_i,t_{i+1})}(t)\cdot\mathrm{d}B(t)\\
    &+ \int_0^T \int_{||y||_2 \leq 1} \sum_{i =1}^{k_n} (F_i^n(X^n(t^-)+y)-F_i^n(X^n(t^-)))\mathbbm{1}_{[t_i,t_{i+1})}(t)\,\mathrm{d}\tilde{N}(t,y)\\
    &+ \sum_{t \in [0,T]} \sum_{i = 1}^{k_n} (F^n_i(X^n(t)))-F_i^n(X^n(t^-)))\mathbbm{1}_{||\Delta x(t)||_2 > 1}\mathbbm{1}_{[t_i,t_{i+1})}(t)-\mathcal{L}_T\mathcal{A}I\left(\sum_{i = 1}^{k_n}\tilde{F}_i^n\mathbbm{1}_{[t_i,t_{i+1})}\right)(X_{\wedge T}).
\end{align*}

As explained next, the convergence of each of these terms is then verified. First, $DF$ is boundedness-preserving, and from the partition taken, $X^n(t) \to X(t)$, almost everywhere, thus by the Dominated Convergence Theorem, the first two integrals converge to $\int_0^T DF(t,X_{\wedge t})\,\mathrm{d}t+\int_0^T \langle \nabla F(t,X(t)),\mu\rangle\,\mathrm{d}s$.

Next, observe that the functional and its space derivatives are left-continuous in time, and that $X(t^-)-X(t^n_i) \to 0$, uniformly for all $t \in (t^n_i,t^n_{i+1})$, once again, from the choice of the partition. So, if $G$ is left-continuous, $\vert\vert G(t^n_i,(X^n_{\wedge t_i^-})^{X(t^-)-X(t_i^-)}) - G(t,X_{\wedge t^-})\vert\vert_\infty\to 0$. This uniform convergence allows to replicate the argument in \cite{EISENBAUM} to obtain the convergence of the second integral to $\int_0^T \nabla F(t,X_{\wedge t^-})^T\Sigma^{1/2}\cdot\mathrm{d}B(t)$, using the Burkholder–Davis–Gundy’s inequality.

\noindent Since $||\Delta X(\cdot)||_2 > 1$ for finitely many $t$, the fourth integral converges to 
\begin{align*}
    \sum_{t \in [0,T]} (F(t,X_{\wedge t})-F(t,X_{\wedge t^-}))\mathbbm{1}_{||\Delta X(t)||_2 > 1}(t).
\end{align*}

\noindent Let us next analyze the integral with respect to the discontinuous martingale:

\begin{dmath*}
 \int_0^T \int_{||y||_2 \leq 1} \sum_{i =1}^{k_n} (F_i^n(X^n(t^-)+y)-F_i^n(X^n(t^-)))\mathbbm{1}_{[t_i,t_{i+1})}(t)\,\mathrm{d}\tilde{N}(t,y)\\
 = \int_0^T \int_{||y||_2 \leq 1} \sum_{i =1}^{k_n} \int_0^1 \langle\nabla F^n_i(X^n(t^-)+hy),y\rangle\,\mathrm{d}h \mathbbm{1}_{[t_i,t_{i+1})}(t)\,\mathrm{d}\tilde{N}(t,y)\\
 = \int_0^T \int_{||y||_2 \leq 1} \int_0^1 \sum_{i =1}^{k_n}\langle\nabla F^n_i(X^n(t^-)+hy),y\rangle\mathbbm{1}_{[t_i,t_{i+1})}(t)\,\mathrm{d}h \,\mathrm{d}\tilde{N}(t,y)
\end{dmath*}.

Once again, if $t \not \in \tau_n$ for any $n$, $\sum_{i =1}^{k_n}\langle\nabla F^n_i(X^n(t^-)+hy),y\rangle\mathbbm{1}_{[t_i,t_{i+1})}(t)$ converges uniformly to $\langle\nabla F(t,X_{\wedge t^-}^{hy}),y\rangle$, and the convergence rate is uniform for $t \not \in \tau_n$. Thus, this convergence occurs almost everywhere in $[0,T]$. Suppose first that $X$ is a.s. bounded by a constant $C > 1$, then $||X(t^-)+hy||_2 \leq 2C$, and:

\begin{dmath*}
\mathbb{E}\left(\int_0^T \int_{||y||_2 \leq 1}\int_0^1 \sum_{i = 1}^{k_n} \langle\nabla F^n_i(X^n(t^-)+hy),y\rangle\mathbbm{1}_{[t_i,t_{i+1})}(t)-\nabla \langle F(t,X_{\wedge t^-}^{hy}),y\rangle\,\mathrm{d}h\tilde{N}(dt,dy)\mathrm{dt}\right)^2\\
= \mathbb{E}\int_0^T \int_{||y||_2 \leq 1}\left(\int_0^1 \sum_{i = 1}^{k_n} \langle\nabla F^n_i(X^n(t^-)+hy),y\rangle\mathbbm{1}_{[t_i,t_{i+1})}(t)-\nabla \langle F(t,X_{\wedge t^-}^{hy}),y\rangle\,\mathrm{d}h\right)^2\nu(dy)\mathrm{dt}\\
\leq \sup_{t \in [0,T], \vert\vert x\vert\vert_\infty < C}\vert\vert \nabla F(t,x_{\wedge t})\vert\vert_2^2\,\,\int_0^T \int_{||y||_2 \leq 1} 4C^2y^2\nu(dy)dt < +\infty
\end{dmath*}.

\noindent Thus, by stochastic dominated convergence, this integral converges in probability to \[\int_0^T\int_{\{||y||_2 \leq 1\}} \left(F(t,X_{\wedge t^-}^y)-F(t,X_{\wedge t^-})\right)\tilde{N}(dt,dy).\]
\noindent For the first term from the $\mathcal{L}$ operator,

\begin{align*}
 \sum_{i = 1}^{k_n}\int_0^T\int_{\mathbb{R}}\frac{\partial^2}{\partial x_j^2} I_j\tilde{F}_i^n(B_s|_{B^j_s = x})\mathbbm{1}_{[t_i,t_{i+1})}(s)\,\mathrm{d}L_t^x(B^j) &= -\int_0^T\int_{\mathbb{R}}\sum_{i =1}^{k_n} \frac{\partial \tilde{F}_i^n}{\partial x^j}(B_s|_{B^j_s = x})\mathbbm{1}_{[t_i,t_{i+1})}(s)\,\mathrm{d}L_s^x(B^j),   
\end{align*}

\noindent then:

\begin{align}
    \frac{1}{2}\mathbb{E}\Big\vert\int_0^T&\int_{\mathbb{R}}\frac{d\tilde{F}_i^n(B_s|_{B^j_s = x})}{dx_j}\mathbbm{1}_{\{[t_i,t_{i+1})\}}(s)-\frac{\partial \tilde{G}_{t,B}}{\partial x_j}(B_s|_{B^j_s = x})\,\mathrm{d}L_s^x\Big\vert\nonumber \leq \left\vert \left\vert \frac{\partial \tilde{F}_i^n(B_s)}{dx_j}\mathbbm{1}_{\{[t_i,t_{i+1})\}}(t)- \frac{\partial \tilde{G}_{t,B}}{\partial x_j}(B_s)\right\vert\right\vert_{L,j}.
\end{align}

\noindent Moreover,

\begin{align*}
    \vert\vert f\vert\vert_{L,j} &\leq \left(2\sqrt{T}+\int_0^T \frac{\vert B^j_s\vert}{s}\,\mathrm{d}s\right)\vert\vert f\vert\vert_\infty.
\end{align*}

\noindent Therefore, since in the infinity norm \[\sum_{i = 1}^{k_n} \frac{\partial \tilde{F}_{i,B}^n(B_s)}{dx_j}\mathbbm{1}_{\{[t_i,t_{i+1})\}}(t) \to \frac{\partial \tilde{G}}{\partial x_j}(B_s),\]
the right-hand side of $(4.11)$ converges to 0, obtaining convergence to the desired integral in $(4.10)$. For the second integral in the $\mathcal L$ operator, define: 

\begin{align*}
    H_{n,j}(t,x) = &\int_{\vert\vert y\vert\vert \leq 1} \sum_{i = 1}^{k_n}\left[\left(I_j \tilde{F}_i^n(x+Ry|_{(Ry)_j = 0})\right)(x+(Ry)_je_j)\mathrm{d}z-I_j\tilde{F}_i^n(x)-(Ry)_j\frac{\partial}{\partial x_j}I_j\tilde{F}_i^n(x)\right]\mathbbm{1}_{[t_i,t_{i+1})}(t)\nu(\mathrm{d}y)\\
    = &\int_{\vert\vert y\vert\vert \leq 1} \sum_{i = 1}^{k_n} \left(\int_0^{Ry_j} \tilde{F}_i^n(x+ze_j)\,\mathrm{d}z-(Ry)_j\tilde{F}_i^n(x)\right)\mathbbm{1}_{[t_i,t_{i+1})}(t)\nu(\mathrm{d}y)\\
    = &\int_{\vert\vert y\vert\vert \leq 1} \sum_{i = 1}^{k_n} \left(\int_0^{Ry_j}\int_0^z \frac{\partial}{\partial x_j}\tilde{F}_i^n(x+he_j)\,\mathrm{d}h\mathrm{d}z\right)\mathbbm{1}_{[t_i,t_{i+1})}(t)\nu(\mathrm{d}y).
\end{align*}

\noindent Similarly, with $F$ instead of $F_n$ define $H_j$, and obtain:

\begin{dmath*}
    \vert H_{n,j}(t,x)-H_j(t,x)\vert\leq \int_{\vert\vert y\vert\vert \leq 1}  \left(\int_0^{Ry_j}\int_0^z \left\vert\sum_{i = 1}^{k_n}\frac{d}{dx_j}F_i^n(x+he_j)\mathbbm{1}_{[t_i,t_{i+1})}(t)
    -\partial_j F(t,X_{\wedge t^-}^{X(t)+h-X(t^-)})\right\vert\,\mathrm{d}h\mathrm{d}z\right)\nu(\mathrm{d}y)
\end{dmath*}.

Once again, assume that $X$ and $B$ have bounded paths (to recover the general case, argue by stopping times). Thus, the difference in the previous equation is bounded by $C$, and $\vert\vert H_n(t,x)-H(t,x)\vert\vert_2 \leq C\vert\vert R\vert\vert_o\int_{\vert\vert y\vert\vert \leq 1} \vert\vert y\vert\vert_2^2\nu(dy)$ almost everywhere in $[0,T]$, where $\vert\vert \cdot\vert\vert_o$ denotes the operator norm, $\vert\vert R\vert\vert_o := \sup\{\vert\vert Rx\vert\vert_2: \vert\vert x\vert\vert_2 = 1\}$. Therefore, since $\vert\vert f\vert\vert_{L,j} \leq C\vert\vert f\vert\vert_\infty$, the convergence of this integral is obtained as in the case of the previous one.\\

\noindent For the last term,

\begin{align*}
    \sum_{i = 1}^{k_n} &\int_{\vert\vert y\vert\vert_2 \leq 1}\left(F^n_i(X(t^-)+(I-Q)y)-F^n_i(X(t^-))-\langle \nabla F_i^n(X^n(t^-)),(I-Q)y\rangle\right)\,\nu(\mathrm{d}y)\mathrm{d}s\mathbbm{1}_{[t_i,t_{i+1})}(t)\\
    = &\sum_{i = 1}^{k_n} \int_{\vert\vert y\vert\vert_2 \leq 1}\int_0^1 \left(\langle\nabla F^n_i(X(t^-)+s(I-Q)y),(I-Q)y\rangle-\langle \nabla F_i^n(X^n(t^-)),(I-Q)y\rangle\right)\mathrm{d}s\mathbbm{1}_{[t_i,t_{i+1})}(t)\,\nu(\mathrm{d}y)\mathrm{d}s.
\end{align*}

\noindent Since $X^n_t$ can be assumed to be bounded, $\nabla F$ is left continuous in time, and $X^n(t) \to X(t)$ a.e. in $[0,T]$, then by the dominated convergence theorem this last integral converges to:

\begin{align*}
    \int_0^T\int_{\vert\vert y\vert\vert_2 \leq 1}&\int_0^1 \left(\langle\nabla F(t,X_{t^-}^{s(I-Q)y}),(I-Q)y\rangle-\langle \nabla F(X^n_{t^-}),(I-Q)y\rangle\right)\mathrm{d}s\mathbbm{1}_{[t_i,t_{i+1})}(t)\,\nu(\mathrm{d}y)\mathrm{d}s.
\end{align*}

Which is indeed the last convergence needed to obtain the terms in $(4.9)$, giving the equation for the case when $X$, and $B$ are bounded. The general case is obtained since if $T_M := \inf\{t > 0: \vert\vert X(t)\vert\vert_2 \vee \vert\vert B(t)\vert\vert_2 > M\}$, the theorem holds locally for $X(t\wedge T_M)$, and thus for $X(t)$, by taking $M \to \infty$.
\end{proof}

The decomposition of functionals of weak Dirichlet processes presented in \cite{bouchardOctober} shows that if $F \in C^{0,1}$, then $F(t,X_{\wedge t})$ can be written as the sum of a local martingale and of an orthogonal process. Although the hypotheses to obtain such
a decomposition are more general than the existence of the horizontal derivative, using Theorem 4.4, the existence of the horizontal derivative allows for a characterization of the orthogonal component.

Combining Theorem $3.3$, and Theorem $4.4$, leads to:

\begin{Theorem}
Let $F \in C^{0,1}$ be differentiable in the direction of the g-Lipschitz $\gamma \in C^{0,0}$, and let $(X(t))_{t \in [0,T]}$ be a L\'{e}vy process with triplet $(\mu,\Sigma,\nu)$. Let $Q$, the projection operator onto the range of $\Sigma^{1/2}$, be such that $\int_{\vert\vert y\vert\vert\leq 1} \vert\vert(I-Q)y\vert\vert_2\,\nu(\mathrm{d}y)<\infty$, and let $G(t,x):  = F(t,X_{\wedge t}^{x-x(t^-)})$. Then,

\begin{align}
F(t&,X_{\wedge t})-F(0,X_{\wedge 0}) = \int_0^t D^\gamma F(s,X_{\wedge s})\,\mathrm{d}s+\int_0^t \langle \nabla F(s,X_{\wedge s}),\mu-\gamma(s,X_{\wedge s})\rangle\,\mathrm{d}s\nonumber\\
&+\int_0^t \nabla F(s,X_{\wedge s^-})^T\Sigma^{1/2}\,\mathrm{d}B(s)
+ \sum_{s \leq t} (F(s,X_{\wedge s})-F(s,X_{\wedge s^-}))\mathbbm{1}_{\{\vert\vert\Delta X(s)\vert\vert_2 > 1\}}\nonumber\\
&+\int_0^t \int_{||y||_2 \leq 1} \left(F(s,X_{\wedge s^-}^y)-F(s,X_{\wedge s^-})\right)\,\mathrm{d}\tilde{N}(ds,dy)-\mathcal{L}_t\mathcal{A}I\tilde{G}(B_{\wedge t})\nonumber\\
&+\int_0^t\int_{\vert\vert y\vert\vert_2 \leq 1} \left(F(s,X_{\wedge s^-}^y)-F(s,X_{\wedge s^-}^{Qy})-\langle \nabla F(t,X_{\wedge t^-}),(I-Q)y\rangle\right)\,\nu(\mathrm{d}y)\,\mathrm{d}s.
\end{align}

\end{Theorem}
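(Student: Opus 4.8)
The plan is to deduce the statement from Theorem 4.4 by feeding into the latter's proof the Radon--Nikodym horizontal derivative supplied by Theorem 3.3 in place of the classical one, and then rewriting $DF$ in terms of $D^\gamma F$ at the very end.

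First I would invoke Theorem 3.3. Since $F\in C^{0,1}$ is differentiable in the direction of the $g$-Lipschitz, boundedness-preserving $\gamma\in C^{0,0}$, the functional $DF(t,x_{\wedge t}):=D^\gamma F(t,x_{\wedge t})-\langle\nabla F(t,x_{\wedge t^-}),\gamma(t,x_{\wedge t})\rangle$ satisfies $F(t+h,x_{\wedge t})-F(t,x_{\wedge t})=\int_t^{t+h}DF(s,x_{\wedge t})\,\mathrm{d}s$ for every pair $(t,x)$ and every $h>0$. Because $\nabla F$ and $\gamma$ are boundedness-preserving (hence so is $\langle\nabla F,\gamma\rangle$), and because $D^\gamma F$ already carries the boundedness-preservation and fixed-time continuity used in the proof of Theorem 3.3, the functional $DF$ enjoys the same properties. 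Consequently $F$, together with this $DF$, fulfils every requirement that the proof of Theorem 4.4 imposes on a $C^{1,1}$ functional, with the single caveat that horizontal differentiability is now available only in the integrated, Radon--Nikodym sense just displayed.

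Next I would re-run the discretization argument of the proof of Theorem 4.4 with this reading of $DF$. The only point in that argument where the pointwise horizontal derivative of a $C^{1,1}$ functional is used is the step that rewrites, along the piecewise-constant approximations $X^n$ of $X$, the constant-path increment $F(t_{i+1},X^n_{\wedge t_{i+1}^-})-F(t_i,X^n_{\wedge t_i})=\int_{t_i}^{t_{i+1}}DF(s,X^n_{\wedge t_i})\,\mathrm{d}s$: on $[t_i,t_{i+1})$ the path $X^n$ is constant, so $X^n_{\wedge t_{i+1}^-}$ is nothing but the constant extension $X^n_{\wedge t_i}$, and this identity is exactly the conclusion of Theorem 3.3 applied at $(t_i,X^n)$ with increment $t_{i+1}-t_i$. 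Every remaining ingredient is untouched: the dominated-convergence passage to $\int_0^t DF\,\mathrm{d}s+\int_0^t\langle\nabla F,\mu\rangle\,\mathrm{d}s$ (via the boundedness-preservation of $DF$, $\nabla F$ and Lemma 2.2), the Burkholder--Davis--Gundy estimate for the Brownian integral, the stochastic dominated convergence for the compensated-Poisson integral, the $\|\cdot\|_{L,j}$-bounds governing the local-time term through the operators $I$, $\mathcal A$, $\mathcal L$, and the dominated convergence handling the $(I-Q)y$ remainder — each of which only uses left-continuity in time, space differentiability and boundedness-preservation of $F$ and $\nabla F$, together with the L\'{e}vy structure of $X$. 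This produces $F(t,X_{\wedge t})-F(0,X_{\wedge 0})=\int_0^t DF(s,X_{\wedge s})\,\mathrm{d}s+\int_0^t\langle\nabla F(s,X_{\wedge s}),\mu\rangle\,\mathrm{d}s$ plus the same jump, compensated-Poisson, $-\mathcal L_t\mathcal A I\tilde G$, and $(I-Q)$-remainder terms that occur in Theorem 4.4.

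Finally I would substitute $DF=D^\gamma F-\langle\nabla F,\gamma\rangle$ and merge the two Lebesgue integrals into $\int_0^t D^\gamma F(s,X_{\wedge s})\,\mathrm{d}s+\int_0^t\langle\nabla F(s,X_{\wedge s}),\mu-\gamma(s,X_{\wedge s})\rangle\,\mathrm{d}s$, the distinction between $X_{\wedge s}$ and $X_{\wedge s^-}$ being immaterial under $\mathrm{d}s$ since they differ only on the countable jump set; this is the asserted identity. As in Theorem 4.4, one first assumes the paths of $X$ and $B$ to be a.s.\ bounded and then removes this by localizing along $T_M:=\inf\{t>0:\|X(t)\|_2\vee\|B(t)\|_2>M\}$ and letting $M\to\infty$. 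The step I expect to be the main obstacle is precisely the bookkeeping of the middle paragraphs: $DF$ is defined only through its relation to $D^\gamma F$ and $\nabla F$, so one must check carefully that, along the approximating sequence $X^n$, it inherits each continuity and boundedness property that the proof of Theorem 4.4 could previously take for granted from the hypothesis ``$F\in C^{1,1}$''; beyond this, everything is a transcription of already-established arguments.
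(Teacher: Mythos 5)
Your proposal is correct and follows exactly the route the paper takes: the paper offers no separate proof of this theorem, stating only that it follows by ``combining Theorem 3.3 and Theorem 4.4,'' and your argument is precisely that combination --- using the Radon--Nikodym identity $F(t+h,x_{\wedge t})-F(t,x_{\wedge t})=\int_t^{t+h}\bigl(D^\gamma F(s,x_{\wedge t})-\langle\nabla F(s,x_{\wedge t^-}),\gamma(s,x_{\wedge t})\rangle\bigr)\,\mathrm{d}s$ in the one place where the proof of Theorem 4.4 invokes horizontal differentiability, then substituting back. Your closing caveat about verifying that the surrogate $DF$ inherits the boundedness-preservation and continuity used downstream is a worthwhile detail that the paper leaves entirely implicit.
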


\begin{remark}
\hfill\begin{enumerate}[(i)]
\item If $\Sigma$ is a $d\times d$ invertible matrix, then $Q = I$, and the condition $\int_{\vert\vert y\vert\vert_2 \leq 1} \vert\vert (I-Q)y\vert\vert_2 \nu(\mathrm{dy}) < \infty$ is immediately satisfied.

\item Given $F \in C^{0,2}$, such that $F$, and $\partial_i F, i = 1,...,d$ are all differentiable in the direction of $\gamma \in C^{0,0}$, then if $(X(t))_{t \in [0,T]}$ is a.s. a c\`{a}dl\`{a}g semimartingale, the corresponding Fisk-Stratonovich formula is given by:
\begin{align}
    F(t,X_{\wedge t}) - F(0,X_{\wedge 0}) =  &\int_0^t D^\gamma F(s,X_{\wedge s})\,\mathrm{d}s-\int_0^t \langle \nabla F(s,X_{\wedge s}),\gamma(s,X_{\wedge s})\rangle\,\mathrm{d}s+\int_0^t \nabla F((s,X_{\wedge s^-})\circ\,\mathrm{d}X(s) \nonumber\\
    &+\sum_{s \in [0,T]} \left(F(s,X_{\wedge s})-F(s,X_{\wedge s^-})-\langle \Delta X(s^-),\nabla F(s,X_{\wedge s^-})\rangle\right), \,\,\text{$\mathbf{P}$-a.s.}
\end{align}

\item The multivariate equivalent to the function space used in \cite{Dupire09} can be defined as $\Lambda := \bigcup_{s \in [0,T]}\Lambda_s$, where $\Lambda_s = D([0,s],\mathbb{R}^d)$, while our framework deals with functions defined in $[0,T]\times D([0,T],\mathbb{R}^d)$. Functions from one space to the other can be shown to be equivalent under the identifications $u: \Lambda \to [0,T]\times D([0,T],\mathbb{R}^d)$ and $v: [0,T]\times D([0,T],\mathbb{R}^d) \to \Lambda$ such that,

\begin{align*}
    u(w_t) &= (t,w(\cdot)\mathbbm{1}_{[0,t)}(\cdot)+w(t)\mathbbm{1}_{[t,T]}(\cdot)),
\end{align*}

i.e., the function $w_t$ with domain $[0,t]$ is mapped to the pair on the right-hand side, which has a path component given by $w|_{[0,t)]}1_{[0,t)}+w(t)1_{[t,T]}$. Moreover, the following identification can also be defined,
\begin{align*}
    v(t,w_{\wedge t}) &= w|_{[0,t]}.
\end{align*}
Given any $w_t \in \Lambda_t$, note that the differential equation,
\begin{align*}
    dy &= \gamma(s,y_{\wedge s})ds \,\,\text{for $s \in (t,T]$},\\
    y(s) &= w(s) \,\,\text{for $s \in [0,t]$},
\end{align*}
\noindent has an unique solution for $\gamma$ g-Lipschitz. Thus $y_s := y|_{[0,s]} \in \Lambda_s$ can be defined for all $s \in (t,T]$, and the derivative in the direction of $\gamma$ is given by,
\begin{align*}
    D^\gamma F_t (y_t) = \frac{F_{t+h}(y_{t+h})-F_t(y_t)}{h}.
\end{align*}

\item Let us next explain how the L\'{e}vy case, without Brownian component, recovers the Brownian one. First, in the definition of $\vert\vert\cdot\vert\vert_*$, let the metric $d_D$ be the one induced by the norm of the complete Skorokhod space. For $F \in C^{1,2}$, $F(t,\cdot)$ is continuous with respect to this norm, and thus given a sequence of functions $(X^n)_{n \geq 1}$ such that $X^n \xrightarrow[n \to \infty]{\mathcal{L}}X$, where $\mathcal{L}$ indicates convergence in law as elements of the Skorokhod space, it follows that

\begin{align*}
    \lim_{n \to \infty}\mathbb{E}[F(t,X^n_{\wedge t})] = \mathbb{E}[F(t,X)].
\end{align*}

Next, following the construction in \cite[Theorem 2.5]{GaussianApprox}, take a measurable family $\{\mu(\cdot|u): u\in S^{d-1}\}$ of L\'{e}vy measures on $(0,+\infty)$, and a finite positive measure $\lambda$ in the unit sphere $S^{d-1}$ whose support is not contained in any hyperplane, such that together they satisfy the condition,
\begin{align*}
    \lim_{\epsilon \to 0^+} \frac{1}{\epsilon^2}\int_0^\epsilon r^2\mu(\mathrm{d}r|u) = \infty,\, \lambda-\text{a.e}.
\end{align*}

\noindent Then, the L\'{e}vy measure $\tilde{\nu}_\epsilon$ defined via:

\begin{align*}
    \tilde{\nu}_\epsilon(\mathrm{d}r,\mathrm{d}u) &:= \mathbbm{1}_{\{r<\epsilon\}}\mu(\mathrm{d}r|u)\lambda(du),\, r>0, u \in S^{d-1},
\end{align*}

\noindent fulfills the conditions of \cite[Theorem 2.2]{GaussianApprox}. Then, if  $\tilde{b}_\epsilon = -\int_{\vert\vert \Sigma_\epsilon^{1/2}x\vert\vert\geq 1}\Sigma_\epsilon^{-1/2}x\tilde{\nu}_\epsilon(\mathrm{d}x)$, the L\'{e}vy processes $(\tilde{X}^\epsilon(t))_{t \in [0,T]}$ with characteristic triplet $(\tilde{b}_\epsilon,0,\tilde{\nu}_\epsilon)$ are such that $X^\epsilon := \Sigma_\epsilon^{-1/2} \tilde{X}^\epsilon \xrightarrow[\epsilon \to 0^+]{\mathcal{L}} B$, where $B$ is a multivariate standard Brownian motion, and moreover the L\'{e}vy process $(X^\epsilon(t))_{t \in [0,T]}$ has triplet $(b_\epsilon,0,\nu_\epsilon)$. The fact that $ X^\epsilon \xrightarrow[\epsilon \to 0^+]{\mathcal{L}} B$, is now used to show that for any $F \in C^{1,2}$, $F(T,B_{\wedge T})$ has the same distribution as the one given by the functional It\^{o} formula. Indeed, without loss of generality assume that $F$ and all its derivatives are bounded and, moreover, since then $\lim_{\epsilon \to 0^+} \int_0^T \mathbb{E}[DF(t,X^\epsilon_{\wedge t})]\,\mathrm{d}t = \int_0^T \mathbb{E}[DF(t,B_{\wedge t})]\,\mathrm{d}t$, assume further that $DF = 0$, then,

\begin{align}
    \mathbb{E}\left[F(T,X^\epsilon_{\wedge T})\right]-\mathbb{E}[F(0,X^\epsilon_{\wedge 0})] &= \int_0^T\int_{\mathbb{R}^d\setminus\{0\}} \mathbb{E}\left[F(t,X^{\epsilon, u}_{\wedge t})-F(t,X^\epsilon_{\wedge t^-})-\langle \nabla F(t,X^\epsilon_{\wedge t}), \mu\rangle\right]\,\nu_\epsilon(\mathrm{d}u)\mathrm{d}t\nonumber\\
    &= \int_0^T \int_{\mathbb{R}^d\setminus\{0\}} \mathbb{E}\left[\int_0^1 Tr(\nabla^2 F(t,X^{\epsilon, su}_{\wedge t^-})uu^t)(1-s)\,\mathrm{d}s\right]\nu_\epsilon(\mathrm{d}u)\mathrm{d}t.
\end{align}

By Lemma 2.2, the second derivatives $\partial_i\partial_j F$ are uniformly continuous therefore, take $\epsilon' > 0$, and $\kappa$ such that for any $1\leq i,j \leq d$, $\vert\vert (t,X_{\wedge t})-(s,Y_{\wedge s})\vert\vert_* < \kappa$, implies

\begin{align*}
    \vert \partial_i\partial_jF(t,X_{\wedge t^-})-\partial_i\partial_jF(s,Y_{\wedge s^-})\vert < \epsilon'.
\end{align*}

Next, note that $\int_{\mathbb{R}^d\setminus\{0\}} u u^t \,\nu_\epsilon(\mathrm{d}u) = Id_d$, and without loss of generality assume the derivatives $\partial_i\partial_j F$ to be bounded by a constant $M > 0$. Then, proceeding as in the proof of Remark 2.2(i) in \cite{Stein},

\begin{align*}
    \Big\vert\int_0^T \int_{\mathbb{R}^d\setminus\{0\}} \int_0^1 Tr(\nabla^2 F(t,&X^{su}_{\wedge t^-})uu^t)(1-s)\,\mathrm{d}s\nu_\epsilon(\mathrm{d}u)\mathrm{d}t-\frac{1}{2}\int_0^T \int_{\mathbb{R}^d\setminus\{0\}} Tr(\nabla^2 F(t,X_{\wedge t^-})uu^t)\,\nu_\epsilon(\mathrm{d}u)\mathrm{d}t\Big\vert\\
    &\leq \int_0^T \int_{0 < \vert\vert u\vert\vert_2 \leq \kappa} \int_0^1 \epsilon' \vert\vert u\vert\vert_2^2(1-s)\,\mathrm{d}s\nu_\epsilon(\mathrm{d}u)\mathrm{d}t+2dM\int_0^T  \int_{\vert\vert u\vert\vert_2 > \kappa}\vert\vert u\vert\vert_2^2\,\nu_\epsilon(\mathrm{d}u)\mathrm{d}t\\
    &= \frac{T\epsilon'}{2}\int_{0<\vert\vert u \vert\vert_2 \leq \kappa} \vert\vert u\vert\vert_2^2\nu_\epsilon(\mathrm{d}u)+2dMT\int_{\vert\vert u\vert\vert_2 > \kappa} \vert\vert u\vert\vert^2_2\nu_\epsilon(\mathrm{d}u) \longrightarrow 0,
\end{align*}
\noindent after first taking $\epsilon \to 0^+$, and then $\epsilon' \to 0^+$. Then, since $X^\epsilon \xrightarrow[\epsilon \to 0^+]{\mathcal{L}} B$,
\begin{align}
    \lim_{\epsilon \to 0^+} \mathbb{E}[F(T,X^\epsilon_{\wedge T})] &= \mathbb{E}[F(0,0)]+\int_0^T \mathbb{E}[Tr(\nabla^2F(t,B_{\wedge t}))]\,\mathrm{d}t.
\end{align}
\noindent Therefore, if $f:\mathbb{R}\to\mathbb{R}$ is a bounded, infinitely differentiable function, with all its derivatives also bounded, then $f\circ F \in C^{1,2}$, and $(4.17)$ gives,

\begin{align}
    \mathbb{E}[(f\circ F)(T,B_{\wedge T})] &= \mathbb{E}[(f\circ F)(0,B_{\wedge 0})]+\int_0^T \mathbb{E}\left[(f'\circ F)(t,B_{\wedge t})DF(t,B_{\wedge t})\right]\,\mathrm{d}t\nonumber\\
    +&\frac{1}{2}\int_0^T \mathbb{E}\left[Tr\left((f''\circ F)(t,B_{\wedge t})\nabla F(t,X_{\wedge t})\nabla^t F(t,B_{\wedge t})+f'(t)\nabla^2F(t,B_{\wedge t})\right)\right]\,\mathrm{d}t.
\end{align}

Let $(Z(t))_{t \in [0,T]}$ be defined by,

\begin{align*}
    Z(t) &:= F(0,B_{\wedge 0})+\int_0^t \left(DF(s,_{\wedge s})+\frac{1}{2}Tr(\nabla^2 F(s,B_{\wedge s}))\right)\,\mathrm{d}s+\int_0^t \nabla F(s,B_{\wedge s})\cdot \mathrm{d}B(s).
\end{align*}

\noindent Then, a direct application of the classical It\^{o}'s formula shows that for any $f$ as above, $\mathbb{E}[f(Z)]$ is equal to the right-hand side of $(4.18)$ and therefore, $Z(T)$ has the same distribution as $F(T,B_{\wedge T})$. Finally, by applying the Skorokhod representation theorem, there exist random variables $Y^\epsilon$ with the same distribution as $F(T,X^\epsilon_{\wedge T})$ converging a.s.\@ to a random variable having the same distribution as $Z$.
\end{enumerate}
\end{remark}
\section{Some Applications}

This next section presents two simple applications of the functional It\^{o} formulas obtained above, one in the case of L\'{e}vy processes, the other involving the derivative in the direction of $\gamma$. (Many more will be presented elsewhere.) To start, let us consider an extension to L\'{e}vy processes of the better pricing PDE for Asian options found in \cite{Dupire09}. Below, $R$ is defined prior to Definition 4.2, while $Q$ is the orthogonal projection as above.\\

\begin{proposition}
Let $(X(t))_{t \in [0,T]}$ be a multivariate L\'{e}vy process, which under $\mathbf{P}$ has triplet $(\mu,\Sigma,\nu)$, such that $\int_{\mathbb{R}^d\setminus\{0\}} \vert\vert x\vert\vert_2^2 \nu(\mathrm{d}x) < \infty$, $\int_{\vert\vert y\vert\vert\leq 1} \vert\vert(I-Q)y\vert\vert_2\,\nu(\mathrm{d}y)<\infty$, and $X(t) = \Sigma B(t)+\int_0^t \int_{\mathbb{R}^d\setminus \{0\}} y \tilde{N}(\mathrm{d}s,\mathrm{d}y)$. Let $J(t) = (J^1(t),...,J^m(t))$, $J^i(t):= \mathbb{E}[\int_0^T X^i(s)\,\mathrm{d}s|\mathcal{F}_t] = \int_0^t X^i(s)\,\mathrm{d}s+(T-t)X^i(t)$, and let the continuously differentiable $f: \mathbb{R}^+\times \mathbb{R}^{2d} \to \mathbb{R}$ be such that $F(t,X_{\wedge t}) = f(t,J(t), x(t))$ is the pricing option of an Asian option. Then,

\begin{align*}
    \frac{\partial f}{\partial t} =& \sum_{i = 1}^m \int_{\mathbb{R}} \left((T-t)\frac{\partial \tilde{f}}{\partial J_i}+\frac{\partial \tilde{f}}{\partial x_i}\right)\Big|_{B^i(t) = x}\mathrm{d}L^x_s(B^i)\\
    &+\int_{\mathbb{R}}\int_{\vert\vert y\vert\vert \leq 1}\int_0^1 \Bigg(\left((T-t)\frac{\partial f}{\partial J_i}+\frac{\partial f}{\partial x_i}\right)(t,J(t)|_{B^i(t)= x}+(T-t)sRy,X(t)|_{B^i(t) = x}+sRy)\\
    &-\left((T-t)\frac{\partial f}{\partial J_i}+\frac{\partial f}{\partial x_i}\right)(t,J(t)|_{B^i(t) = x},X(t)|_{B^i(t) = x})\Bigg)(Ry)_i\nu(\mathrm{d}y)\,\mathrm{d}L^x_s(B^i)\\
    -&\int_{\vert\vert y \vert\vert_2 > 1} \left(f(t,,J(t^-)+(T-t)y,X(t^-)+y)-f(t,J(t^-),X(t^-))-\langle (T-t)\nabla_J f+\nabla_x f,y\rangle\right) \nu(\mathrm{d}y)\\
    -&\int_{\vert\vert y \vert\vert_2 \leq 1} \Big(f(t,,J(t^-)+(T-t)y,X(t^-)+y)-f(t,J(t^-)+(T-t)Qy,X(t^-)+Qy)\\
    &-\langle (T-t)\nabla_J f+\nabla_x f,(I-Q)y\rangle\Big) \nu(\mathrm{d}y).
\end{align*}
\end{proposition}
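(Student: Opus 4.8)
The plan is to recognize $F(t,X_{\wedge t})=f(t,J(t),X(t))$ as an explicit path functional of $X$, to compute its horizontal and vertical derivatives in closed form, to feed these into the Optimal Functional It\^{o} Formula, and finally to use that an option price has vanishing drift. Writing $F(t,w_{\wedge t})=f\bigl(t,\int_0^t w(s)\,\mathrm{d}s+(T-t)w(t),w(t)\bigr)$, one checks that a horizontal extension of $w_{\wedge t}$ by $h>0$ leaves the $J$-slot unchanged --- the extra $h\,w(t)$ contributed to $\int_0^{t+h}w_{\wedge t}$ is exactly cancelled by the change of $(T-t)w(t)$ into $(T-t-h)w(t)$ --- so $DF(t,w_{\wedge t})=\partial_t f(t,J(t),w(t))$, which is fixed-time continuous and boundedness-preserving since $f$ is $C^1$, hence $F\in C^{1,1}$. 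A vertical bump $w_{\wedge t}^{he_i}$ shifts the $J$-slot by $(T-t)he_i$ and the terminal value by $he_i$, so $\nabla F(t,w_{\wedge t})=(T-t)\nabla_J f+\nabla_x f$ evaluated at $(t,J(t),w(t))$. The same bookkeeping yields $F(s,X_{\wedge s^-}^y)=f\bigl(s,J(s^-)+(T-s)y,X(s^-)+y\bigr)$ for a jump of size $y$ and, with $\bar J(t):=\int_0^t X(s)\,\mathrm{d}s$, $G(t,x)=f\bigl(t,\bar J(t)+(T-t)x,x\bigr)$; using $\partial_{x_i}I_i=\mathrm{id}$ and $\partial^2_{x_i}I_i=\partial_{x_i}$ together with the changes of variables through $\Sigma^{1/2}$ and $R$, one identifies $\mathcal{A}I\tilde{G}$ with the integrand appearing in the two local-time integrals of the statement.

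Next, apply the Optimal Functional It\^{o} Formula to $F$ and $X$. Since $\int_{\mathbb{R}^d\setminus\{0\}}\|x\|_2^2\,\nu(\mathrm{d}x)<\infty$, the finitely many large jumps can be compensated: $\sum_{s\le t}\bigl(F(s,X_{\wedge s})-F(s,X_{\wedge s^-})\bigr)\mathbbm{1}_{\{\|\Delta X(s)\|_2>1\}}$ together with $\int_0^t\langle\nabla F,\mu\rangle\,\mathrm{d}s$ becomes a compensated integral against $\tilde{N}$ plus $\int_0^t\!\int_{\|y\|_2>1}\bigl(F(s,X_{\wedge s^-}^y)-F(s,X_{\wedge s^-})-\langle\nabla F,y\rangle\bigr)\,\nu(\mathrm{d}y)\,\mathrm{d}s$. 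Substituting the closed forms of $DF$, $\nabla F$, $F(\cdot^y)$ and $\mathcal{A}I\tilde{G}$ then converts every term into the corresponding $f$-expression in the proposition: the $\{\|y\|_2\le1\}$-compensator is the last integral, the large-jump compensator just obtained is the next-to-last, the term $-\mathcal{L}_t\mathcal{A}I\tilde{G}(B_{\wedge t})$ produces the two local-time integrals written with $\tilde{f}$, and $DF$ gives $\partial_t f$.

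Finally, since $F(t,X_{\wedge t})$ is an Asian-option price it equals a conditional expectation of the (discounted) payoff, hence is a $\mathbf{P}$-martingale. Removing from the identity just obtained all the genuine $(\mathcal{F}_t)$-martingale terms --- the $\mathrm{d}B$-integral, the integral against $\tilde{N}$, and the forward-It\^{o} component of the local-time integral as written out in Definition 4.1 --- shows that what is left, namely $\int_0^t(\cdots)\,\mathrm{d}s$ minus the backward-It\^{o} component of $\mathcal{L}_t\mathcal{A}I\tilde{G}$, is itself a martingale; matching the $\mathrm{d}s$-densities and moving $\partial_t f$ alone to the left gives the asserted identity. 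The main obstacle is exactly this last step: the local-time integral is not of finite variation, so ``the drift vanishes'' cannot simply be read off a semimartingale decomposition and must instead be justified by mollifying $f$ to smooth $f_n$ (for which the pricing equation is classical and the compensators honest), establishing the identity for $f_n$, and passing to the limit using the bound $(4.7)$ and the identity $(4.8)$ --- the properties that make $\mathcal{L}\mathcal{A}I$ play the role of the (otherwise undefined) second-order term for a merely $C^1$ payoff $f$. A subsidiary, purely computational nuisance is the chain-rule bookkeeping relating $\partial_{x_i}\tilde{G}$ to $(T-t)\partial_{J_i}\tilde{f}+\partial_{x_i}\tilde{f}$ through $\Sigma^{1/2}$ and $R$.
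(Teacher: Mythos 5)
Your proposal follows the paper's route: identify the explicit horizontal and vertical derivatives of $F(t,X_{\wedge t})=f(t,J(t),X(t))$ (your formulas $DF=\partial_t f$, $\nabla F=(T-t)\nabla_J f+\nabla_x f$, and the jump increments agree with the paper's $(5.2)$--$(5.4)$), apply the optimal functional It\^{o} formula of Theorem 4.4, and then invoke the martingale property of the price to annihilate everything that is not a genuine martingale term. The one place you genuinely diverge is the justification of this last step. The paper claims that every non-martingale term in $(5.1)$ --- including the local-time integrals --- is of bounded variation (citing the bound $(4.7)$ and local bounds on the derivatives), so that the drift part of the semimartingale decomposition must vanish. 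You instead assert that the local-time integral is \emph{not} of finite variation for a merely $C^1$ payoff and propose to mollify $f$. Your skepticism about the BV claim is well founded: the integrand $\mathcal{A}_iI_i\tilde G$ is only continuous when $f$ is $C^1$, so $(4.8)$ does not apply and the local-time integral is in general a zero-energy process rather than a BV one; the paper's appeal to $(4.7)$ gives integrability, not bounded variation. However, your proposed repair does not quite close the gap either: for a mollification $f_n$ of $f$, the process $f_n(t,J(t),X(t))$ is no longer a martingale, so there is no ``classical pricing equation for $f_n$'' to pass to the limit from --- mollification is what proves Theorem 4.4 itself, not what extracts the PDE from the martingale property. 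The cleanest way to finish is to note that, by Definition 4.1, $\mathcal{L}_t\mathcal{A}I\tilde G(B_{\wedge t})$ is a continuous adapted process of zero quadratic variation; hence the residual (the $\mathrm{d}s$-integrals plus the local-time terms) is a continuous local martingale of zero quadratic variation started at $0$, and is therefore identically zero, which is exactly the integrated form of the asserted identity. With that substitution your argument is complete and otherwise coincides with the paper's.
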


\begin{proof}

For each $t$,
\begin{align*}
    X(t) = \Sigma B(t)-\int_0^t\int_{\vert\vert y\vert\vert_2 > 1} y\nu(\mathrm{d}t)+\int_0^t\int_{\vert\vert y\vert\vert_2 > 1}y N(\mathrm{d}s,\mathbb{d}y)+\int_0^t\int_{\vert\vert y\vert\vert_2 \leq 1}y \tilde{N}(\mathrm{d}s,\mathbb{d}y). 
\end{align*}
\noindent Therefore, applying Theorem 4.2.\ to the functional $F \in C^{1,1}$ given by $F(t,X_{\wedge t}) = f(t,J(t),X(t))$, the following identity is obtained in differential notation:
\begin{align}
    dF(t,X_{\wedge t}) &= \langle\nabla F(t,X_{\wedge t^-}),\Sigma dB(t)\rangle +\int_{\mathbb{R}^d\setminus\{0\}} \left(F(t,X_{\wedge t})-F(t,X_{\wedge t^-})\right)\tilde{N}(\mathrm{d}s,\mathrm{d}y)\nonumber\\
    &+DF(t,X_{\wedge t})\mathrm{d}t-\sum_{i = 1}^m \int_{\mathbb{R}}\mathcal{A}_iI\tilde{G}(B(t)|_{B^i(t) = x})\,\mathrm{d}L^x_s(B^i)\nonumber\\
    &+\int_{\vert\vert y \vert\vert_2 > 1} \left(F(t,X_{\wedge t^-}^y)-F(t,X_{\wedge t^-})-\langle \nabla F(t,X_{\wedge t^-}),y\rangle\right) \nu(\mathrm{d}y)\mathrm{d}t\nonumber\\
    &+\int_{\vert\vert y \vert\vert\leq 1} \left(F(t,X_{\wedge t^-}^y)-F(t,X_{\wedge t^-}^{Qy})-\langle \nabla F(t,X_{\wedge t^-}),(I-Q)y\rangle\right) \nu(\mathrm{d}y)\mathrm{d}t. 
\end{align}
\end{proof}

Since the pricing function $F(t,X_{\wedge t})$ ought to be a martingale under the given measure, then it would be enough to show that all but the first two terms are of bounded variation. Since $DF$ is locally bounded, the integral of the term $DF(t,X_{\wedge t})$ is of bounded variation, moreover, almost surely, the same is true for the terms involving the measure with respect to the local time, from the inequality (4.7) and the local bounds on the derivatives. The next to last integral can be split into $\int_{\vert\vert y\vert\vert_2 > 1} \left(F(s,X_{\wedge s^-}^y)-F(s,X_{\wedge s^-})\right)\nu(\mathrm{d}y)$, which is bounded from the finiteness of $\nu(\mathbb{R}^d\setminus \bar{B}_1)$, and $\int_{\vert\vert y\vert\vert_2 \geq 1} \langle \nabla F(t,X_{\wedge t^-}),(I-Q)y\rangle\nu(\mathrm{d}y)$ which is finite due to the finite second moment condition of the hypothesis. Moreover, using ideas similar to those after equation $(4.12)$, the last integral is also of bounded variation, as a consequence of the condition $\int_{\vert\vert y\vert\vert\leq 1} \vert\vert(I-Q)y\vert\vert_2\,\nu(\mathrm{d}y)<\infty$.\\

\noindent Therefore, since,

\begin{align}
    &DF(t,X_{\wedge t}) = \frac{\partial f}{\partial t}(t,J(t),X(t)),\\
    &\nabla F(t,X_{\wedge t})= \Big((T-t)\nabla_J f + \nabla_x f\Big)(t,J(t),X(t))](t,J(t),X(t)),\\
    &F(t,X_{\wedge t^-}^y)-F(t,X_{\wedge t^-}) = f(t,J(t^-)+(T-t)y,X(t^-)+y)-f(t,J(t^-),X(t^-)),
\end{align}

\noindent Theorem 5.1 is obtained, as the bounded variation component in $(5.1)$ is zero.\\

The result below showcases a problem that cannot be tackled by means of the horizontal derivative alone, but where the derivative in the direction of a functional allows for an application of the functional It\^{o} formula. First, a definition is in order.

\begin{definition}
A left-continuous, non-anticipating, boundedness preserving, $h$-Lipschitz functional $\gamma: \mathbb{R}^d \to \mathbb{R}^d$ is said to ignore a single jump, if for all $t \in [0,T]$, $x \in D([0,T],\mathbb{R}^d)$, $\gamma(t,x_{\wedge t}) = \gamma(t,(x(\cdot)-\Delta x(s)\mathbbm{1}_{\{s\}}(\cdot))_{\wedge t})$, for all $s \leq t$.
\end{definition}

The next theorem provides an integral form for any functional which is constant along the curves with derivative given by $\gamma$. Clearly, the case of ignoring finitely many jumps follows with the same approach.

\begin{proposition}
Let $f: \mathbb{R}^d \to \mathbb{R}$ be continuously differentiable, let $\gamma$ be single jump ignoring, and let $F:D([0,T],\mathbb{R}^d) \to \mathbb{R}$ such that $F(t,X_{\wedge t}) := f\left(X(t)+\int_t^T \gamma(s,Y^{s,X}_{\wedge s})\,\mathrm{d}s\right)$, where $Y^{s,X}$ is as in Definition 3.1 and where $(X(t))_{t \in [0,T]}$ is $\mathbf{P}$-a.s. a continuous semimartingale. Then, 

\begin{align}
    f(X(T))-f\left(X(0)+\int_0^T \gamma(s,Y_{\wedge s}^{s,X})\,\mathrm{d}s\right) = \int_0^T \nabla F(t,X_{\wedge t^-})\cdot\mathrm{d}X(t)-\int_0^T \langle \nabla F(t,X_{\wedge t}), \gamma(t,X_{\wedge t})\rangle\,\mathrm{d}t& \nonumber\\
    +\int_0^T Tr(\nabla^2 F(t,X_{\wedge t}) \mathrm{d}[X](t)),&
\end{align}

\noindent where, $\nabla F(t,X_{\wedge t}) = \nabla f\left(X(t)+\int_t^T \gamma(s,Y^{t,X})\mathrm{d}s\right)$, and $\partial_i\partial_j F(t,X_{\wedge t}) = \frac{\partial^2}{\partial x_i\partial x_j}f\left(X(t)+\int_t^T \gamma(s,Y^{t,X}_{\wedge s})\,\mathrm{d}s\right)$.
\end{proposition}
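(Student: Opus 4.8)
The plan is to establish that $F\in C^{1,2}$ with the gradients stated, that $F$ is constant along the $\gamma$-flow (equivalently $D^\gamma F\equiv 0$), and then to feed $F$ into the functional It\^o formula, Theorem \ref{Functional Ito Formula}, after replacing $DF$ by $-\langle\nabla F,\gamma\rangle$ by means of $(3.3)$. Write $Y^{t,X}$ for the $\gamma$-flow extending $X_{\wedge t}$ on $(t,T]$, that is, the solution of $(3.1)$ with initial datum $X_{\wedge t}$ at time $t$; it exists and is unique because $\gamma$ is $g$-Lipschitz and boundedness-preserving, by the construction in Section 3. Since $\int_t^T\gamma(s,Y^{t,X}_{\wedge s})\,\mathrm{d}s = Y^{t,X}(T)-X(t)$, the functional admits the compact form $F(t,X_{\wedge t}) = f\bigl(Y^{t,X}(T)\bigr)$: it is $f$ evaluated at the endpoint of the $\gamma$-extension of $X_{\wedge t}$.

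From this representation, $D^\gamma F\equiv 0$ is immediate. Moving along the $\gamma$-flow from $(t,X_{\wedge t})$ to $(t+h,Y^{t,X}_{\wedge t+h})$ and then $\gamma$-extending to $T$ lands at the same endpoint, because, by the uniqueness of solutions of $(3.1)$, the flow restarted at $t+h$ from $Y^{t,X}_{\wedge t+h}$ coincides with $Y^{t,X}$ on $[t+h,T]$. Hence $F(t+h,Y^{t,X}_{\wedge t+h}) = f\bigl(Y^{t,X}(T)\bigr) = F(t,X_{\wedge t})$ for every $h>0$, so the difference quotient defining $D^\gamma F$ in $(3.2)$ vanishes identically. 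No jump enters this step, since $Y^{t,X}$ is continuous.

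The regularity of $F$ is next. For the vertical derivative, perturbing $X_{\wedge t}$ to $X_{\wedge t}^{he_i}$ inserts a jump of size $he_i$ at time $t$; since $\gamma$ is insensitive to that inserted jump, $Y^{t,X}+he_i\mathbbm{1}_{[t,T]}$ solves $(3.1)$ for the perturbed datum (the jump-ignoring property being invoked at each evaluation of $\gamma$ along this candidate path), so by uniqueness $Y^{t,X_{\wedge t}^{he_i}} = Y^{t,X}+he_i\mathbbm{1}_{[t,T]}$, and, again by jump-ignoring, $\int_t^T\gamma\bigl(s,Y^{t,X_{\wedge t}^{he_i}}_{\wedge s}\bigr)\,\mathrm{d}s = \int_t^T\gamma\bigl(s,Y^{t,X}_{\wedge s}\bigr)\,\mathrm{d}s$ exactly. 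Therefore $F(t,X_{\wedge t}^{he_i}) = f\bigl(X(t)+he_i+\int_t^T\gamma(s,Y^{t,X}_{\wedge s})\,\mathrm{d}s\bigr)$, and letting $h\to0$ (using the smoothness of $f$) gives $\nabla F(t,X_{\wedge t})=\nabla f\bigl(Y^{t,X}(T)\bigr)$ and $\partial_i\partial_j F(t,X_{\wedge t})=\partial_i\partial_j f\bigl(Y^{t,X}(T)\bigr)$, the announced expressions. The left-continuity in time of $F$, $\nabla F$ and $\nabla^2F$, their continuity at fixed times, and the boundedness-preserving property then follow from the corresponding properties of these composites: a Gronwall estimate shows that $(t,x_{\wedge t})\mapsto Y^{t,x}(T)$ is continuous for $\vert\vert\cdot\vert\vert_*$ and carries $\vert\vert\cdot\vert\vert_\infty$-bounded sets into compacts, and combining this with Lemma 2.2 and the continuity of $\nabla f,\nabla^2f$ gives $F\in C^{1,2}$.

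Finally, since $F\in C^{1,2}$ and $D^\gamma F\equiv0$, relation $(3.3)$ (equivalently Theorem 3.3) yields $DF(t,X_{\wedge t}) = -\langle\nabla F(t,X_{\wedge t}),\gamma(t,X_{\wedge t})\rangle$. Applying Theorem \ref{Functional Ito Formula} to $F$ under $\mathbf{P}$, legitimate because $X$ is $\mathbf{P}$-a.s.\ a continuous semimartingale, and using $F(T,X_{\wedge T})=f(X(T))$ (the extension interval being empty) together with $F(0,X_{\wedge 0})=f\bigl(X(0)+\int_0^T\gamma(s,Y^{0,X}_{\wedge s})\,\mathrm{d}s\bigr)$, substitution of the above expression for $DF$ gives precisely $(5.5)$. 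The main obstacle is the verification that $F\in C^{1,2}$ with these clean gradients, which is exactly where the single-jump-ignoring hypothesis is indispensable: it is what forces the inner integral $\int_t^T\gamma(s,Y^{t,X}_{\wedge s})\,\mathrm{d}s$ to contribute nothing to the vertical derivative. The remaining left-continuity and boundedness bookkeeping is routine once the Gronwall bound is available, though, because $\gamma$ is only left-continuous in time, the identification of $DF$ with $-\langle\nabla F,\gamma\rangle$ is cleanest via the Radon--Nikodym form of Theorem 3.3 rather than through a pointwise one-sided limit.
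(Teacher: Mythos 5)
Your argument is correct and takes essentially the same route as the paper's (much terser) proof: establish $D^\gamma F\equiv 0$ from the flow/uniqueness property of $(3.1)$, use the single-jump-ignoring hypothesis to identify $\nabla F$ and $\nabla^2F$ with the derivatives of $f$ at the endpoint $Y^{t,X}(T)$, and then invoke the functional It\^{o} formula with $DF=-\langle\nabla F,\gamma\rangle$ via Theorem 3.3. The regularity bookkeeping you add (Gronwall continuity of $(t,x_{\wedge t})\mapsto Y^{t,x}(T)$, the $C^{1,2}$ verification, and the implicit need for $f$ to be twice differentiable) is precisely what the paper's one-line proof leaves unstated, so no further comparison is needed.
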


\begin{proof}
\begin{align*}
F(t+h,Y^{t, X}_{\wedge t+h}) &= f\left(Y^{t,X}(t+h)+\int_{t+h}^T \gamma(s,Y_{\wedge s}^{t+h,X})\,\mathrm{d}s\right)\\
&= f\left(X(t)+\int_{t}^T \gamma(s,Y_{\wedge s}^{t+h,X})\,\mathrm{d}s\right) = F(t,X_{\wedge t}).
\end{align*}

\noindent Thus, $D^\gamma F(t,X_{\wedge t}) = 0$, and the property of ignoring a single jump allows to obtain $\nabla F$ and $\partial_i\partial_j F$ as noted above. Then, $(5.5)$ follows from a direct application of the functional It\^{o} formula using the derivative in the direction of $\gamma$.
\end{proof}

\printbibliography
\end{document}